\newtheorem{thm}{Theorem}[section]
\newtheorem{lem}[thm]{Lemma}
\newtheorem{prop}[thm]{Proposition}
\newtheorem{definition}{Definition}
\begin{document}

\begin{center}
{\large \bf  Pattern-avoiding alternating words}
\end{center}

\begin{center}
Emma L.L. Gao$^{1}$,
Sergey Kitaev$^{2}$, and Philip B. Zhang$^{3}$\\[6pt]

$^{1,3}$Center for Combinatorics, LPMC-TJKLC,\\
Nankai University, Tianjin 300071, P. R. China\\[6pt]

$^{2}$ Department of Computer and Information Sciences, \\
University of Strathclyde, 26 Richmond Street, Glasgow G1 1XH, UK\\[6pt]

Email: $^{1}${\tt gaolulublue@mail.nankai.edu.cn},
	   $^{2}${\tt sergey.kitaev@cis.strath.ac.uk},
           $^{3}${\tt zhangbiaonk@163.com}
\end{center}

\noindent\textbf{Abstract.}
A word $w=w_1w_2\cdots w_n$ is alternating if either $w_1<w_2>w_3<w_4>\cdots$ (when the word is up-down) or $w_1>w_2<w_3>w_4<\cdots$ (when the word is down-up). In this paper, we initiate the study of (pattern-avoiding) alternating words. We enumerate up-down (equivalently, down-up) words via finding a bijection with order ideals of a certain poset. Further, we show that the number of 123-avoiding up-down words of even length is given by the Narayana numbers, which is also the case, shown by us bijectively, with 132-avoiding up-down words of even length. 
We also give formulas for enumerating  all other cases of avoidance of a permutation pattern of length 3 on alternating words. 

\noindent {\bf Keywords:}  alternating permutations, up-down permutations, down-up permutations, pattern-avoidance, Narayana numbers, Fibonacci numbers, order ideals, bijection

\noindent {\bf AMS Subject Classifications:}  05A05, 05A15

\section{Introduction}\label{intro}
A permutation $\pi=\pi_1\pi_2\cdots \pi_n$ is called {\em up-down} if $\pi_1<\pi_2>\pi_3<\pi_4>\pi_5<\cdots$.  A permutation $\pi=\pi_1\pi_2\cdots \pi_n$ is called {\em down-up} if $\pi_1>\pi_2<\pi_3>\pi_4<\pi_5>\cdots$. A famous result of Andr\'{e} is saying that if $E_n$ is the number of up-down (equivalently, down-up) permutations of $1,2,\ldots,n$, then $$\sum_{n\geq 0}E_n\frac{x^n}{n!}=\sec x+\tan x.$$ Some aspects of up-down and down-up permutations, also called {\em reverse alternating} and {\em alternating}, respectively, are surveyed in~\cite{Stanley2010survey}. Slightly abusing these definitions, we refer to {\em alternating permutations} as the union of up-down and down-up permutations. This union is known as the set of {\em zigzag permutations}.

In this paper, we extend the study of alternating permutations to that of {\em alternating words}. These words, also called {\em zigzag words}, are the union of up-down and down-up words, which are defined in a similar way to the definition of up-down and down-up permutations, respectively. For example, $1214$, $2413$, $2424$ and $3434$ are examples of up-down words of length 4 over the alphabet $\{1,2,3,4\}$.

Section~\ref{en-alt-words} is dedicated to the enumeration of up-down words, which is equivalent to enumerating down-up words by applying the operation of {\em complement}. For a word $w=w_1w_2\cdots w_n$ over the alphabet $\{1,2,\ldots,k\}$ its complement $w^c$ is the word $c_1c_2\cdots c_n$, where for each $i=1,2,\ldots,n$, $c_i=k+1-w_i$. For example, the complement of the word $24265$ over the alphabet $\{1,2,\ldots,6\}$ is $53512$. Our enumeration in Section~\ref{en-alt-words} is done by linking bijectively up-down words to order ideals of certain posets and using known results.

A ({\em permutation}) {\em pattern} is a permutation $\tau=\tau_1\tau_2\cdots\tau_k$. We say that a permutation $\pi=\pi_1\pi_2\cdots\pi_n$ {\em contains an occurrence} of $\tau$ if there are $1\leq i_1< i_2<\cdots< i_k\leq n$ such that $\pi_{i_1}\pi_{i_2}\cdots \pi_{i_k}$ is order-isomorphic to $\tau$. If $\pi$ does not contain an occurrence of $\tau$, we say that $\pi$ {\em avoids}~$\tau$. For example, the permutation 315267 contains several occurrences of the pattern 123, for example, the subsequences 356 and 157, while this permutation avoids the pattern 321. Occurrences of a pattern in words are defined similarly as subsequences order-isomorphic to a given word called pattern (the only difference with permutation patterns is that word patterns can contain repetitive letters, which is not in the scope of this paper).

A comprehensive introduction to the theory of patterns in permutations and words can be found in~\cite{Kitaev2011Patterns}. In particular, Section 6.1.8 in~\cite{Kitaev2011Patterns} discusses known results on pattern-avoiding alternating permutations, and Section 7.1.6 discusses results on permutations avoiding patterns in a more general sense.

In this paper we initiate the study of pattern-avoiding alternating words. In Section~\ref{123-up-down-sec} we enumerate up-down words over $k$-letter alphabet avoiding the pattern 123. In particular, we show that in the case of even length, the answer is given by the {\em Narayana numbers} counting, for example, {\em Dyck paths} with a specified number of peaks (see Theorem~\ref{main-thm}). Interestingly, the number of 132-avoiding words over $k$-letter alphabet of even length is also given by the Narayana numbers, which we establish bijectively in Section~\ref{bijection-sec}. In Section~\ref{sec-132-av} we provide a (non-closed form) formula for the number of 132-avoiding words over $k$-letter alphabet of odd length. In Section~\ref{312-up-down-sec} we show that the enumeration of 312-avoiding up-down words is equivalent to that of 123-avoiding up-down words. Further, a classification of all cases of avoiding a length 3 permutation pattern on up-down words is discussed in Section~\ref{all-cases}. Finally, some concluding remarks are given in Section~\ref{last-sec}.

In what follows, $[k]=\{1,2,\ldots,k\}$.

\section{Enumeration of up-down words}\label{en-alt-words}

In this section, we consider the enumeration of up-down words.
We shall show that this problem is the same as that of  enumerating order ideals of a certain poset.
Since up-down words are in one-to-one-correspondence with down-up words by using the complement operation, we consider only down-up words throughout this section.

Table~\ref{tab1} provides the number $N_{k,\ell}$ of down-up words of length $\ell$ over the alphabet $[k]$ for small values of $k$ and $\ell$ indicating connections to the {\em Online Encyclopedia of Integer Sequences} ({\em OEIS})~\cite{SloaneLine}.

\begin{table}[!htb]
\small
\begin{center}
\begin{tabular}{|c|lllllllllll|l|}
\hline
\diagbox{$k$}{$\ell$}& 0 & 1 & 2 & 3 & 4 &5 &6 & 7 &8&9 &10& OEIS\\
\hline
2&  1& 2&1& 1&1& 1&1& 1&1& 1&1& trivial \\
\hline
3&  1& 3& 3& 5& 8& 13& 21& 34& 55& 89& 144 & A000045\\
\hline
4&  1&  4& 6& 14& 31& 70& 157& 353& 793& 1782& 4004 & A006356\\
\hline
5&  1&  5& 10& 30& 85& 246& 707& 2037& 5864& 16886& 48620& A006357\\
\hline
6&  1&  6& 15& 55& 190& 671& 2353& 8272& 29056 &  102091 & 358671 & A006358\\
\hline
7&  1&  7& 21& 91& 371& 1547& 6405& 26585& 110254 & 457379 & 1897214 & A006359\\
\hline
\end{tabular}
\end{center}
\caption{The number $N_{k,\ell}$ of down-up words on $[k]$ of length $\ell$ for small values of $k$ and~$\ell$.}\label{tab1}
\end{table}

We assume the reader is familiar with the notion of a partially ordered set (poset) and some basic properties of posets; e.g. see \cite{Stanley1997Enumerative}. A partially ordered set $P$ is a set  together with a binary relation denoted by $\leq_P$ that satisfies the properties of reflexivity, antisymmetry and transitivity. An order ideal of $P$ is a subset $I$ of $P$ such that if $x\in I$ and $y\leq x$ then $y\in I$. We denote $J(P)$ the set  of all order ideals of $P$.

Let $\mathbf{n}$ be the poset on $[n]$ with its usual order ($\mathbf{n}$ is a linearly ordered set).
The {\em $m$-element zigzag poset}, denoted $Z_m$, is shown schematically in Figure \ref{Zm}. Note that the order $<_{Z_m}$ in $Z_m$ is $1<2>3<4>5<\cdots.$ The definition of the order $\leq_{Z_m}$ is self-explanatory.

\begin{figure}[!htb]
\small
    \centering
    \begin{minipage}{.5\textwidth}
        \centering
        \begin{tikzpicture}[scale=1.4]
	\draw (3,0)--(3.5,1)--(4,0)--(4.5,1)--(5,0);
	\fill(3,0) circle(0.04cm);    \node [below] at (3,0) {$1$};
	\fill(3.5,1) circle(0.04cm); \node [above] at (3.5,1) {$2$};
	\fill(4,0) circle(0.04cm);    \node [below] at (4,0) {$3$};
	\fill(4.5,1) circle(0.04cm); \node [above] at (4.5,1) {$4$};
	\fill(5,0) circle(0.04cm);    \node [below] at (5,0) {$5$};
	
	\fill(5.2,0.5) circle(0.02cm); \fill(5.5,0.5) circle(0.02cm); \fill(5.8,0.5) circle(0.02cm);

	\draw (6,0)--(6.5,1);
	\fill(6,0) circle(0.04cm);    \node [below]  at (6,0){$m-1$};
	\fill(6.5,1) circle(0.04cm); \node [above]  at (6.5,1){$m$};
	
	\coordinate[label=below:$m$ even] (1) at (5,-0.5);
        \end{tikzpicture}
    \end{minipage}%
    \begin{minipage}{0.5\textwidth}
        \centering
        \begin{tikzpicture}[scale=1.4]
	\draw (3,0)--(3.5,1)--(4,0)--(4.5,1)--(5,0);

	\fill(3,0) circle(0.04cm);    \node [below] at (3,0) {$1$};
	\fill(3.5,1) circle(0.04cm); \node [above] at (3.5,1) {$2$};
	\fill(4,0) circle(0.04cm);    \node [below] at (4,0) {$3$};
	\fill(4.5,1) circle(0.04cm); \node [above] at (4.5,1) {$4$};
	\fill(5,0) circle(0.04cm);    \node [below] at (5,0) {$5$};
	
	\fill(5.2,0.5) circle(0.02cm); \fill(5.5,0.5) circle(0.02cm); \fill(5.8,0.5) circle(0.02cm);

	\draw (6,1)--(6.5,0);
	\fill(6,1) circle(0.04cm);    \node [above]  at (6,1){\small $m-1$};
	\fill(6.5,0) circle(0.04cm); \node [below]  at (6.5,0){\small $m$};
	\coordinate[label=below:$m$ odd] (1) at (5,-0.5);
	\end{tikzpicture}
    \end{minipage}
  \caption{The zigzag poset $Z_{m}$.}
  \label{Zm}
\end{figure}
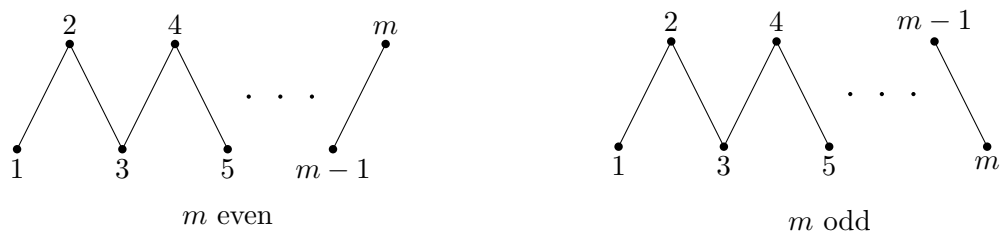

The poset $Z_m \times \mathbf{n}$ is as shown in Figure~\ref{Zmn}. Elements of $Z_m \times \mathbf{n}$ are pairs $(i,j)$, where $i \in Z_m$ and $j\in [n]$, and the order is defined as follows:
$$(i,j)\leq (k,\ell) \mbox{ if and only if } i\leq_{Z_{m}}k\mbox{ and } j\leq \ell.$$

\begin{figure}[!htb]
\tiny
    \centering
    \begin{minipage}{.5\textwidth}
        \centering
        \begin{tikzpicture}[scale=1.5]
      \draw (3,1)--(3.5,2)--(4,1)--(4.5,2)--(5,1);
      \draw (3,-1)--(3.5,0)--(4,-1)--(4.5,0)--(5,-1);
      \draw (3,-2)--(3.5,-1)--(4,-2)--(4.5,-1)--(5,-2);
      \draw (3,1)--(3,0.7); \draw  (3,-0.3)--(3,-1)--(3,-2);
      \draw (3.5,2)--(3.5,1.3); \draw  (3.5,0.3)--(3.5,0)--(3.5,-1);
      \draw (4,1)--(4,0.7); \draw  (4,-0.3)--(4,-1)--(4,-2);
      \draw (4.5,2)--(4.5,1.3); \draw  (4.5,0.3)--(4.5,0)--(4.5,-1);
      \draw (5,1)--(5,0.7); \draw  (5,-0.3)--(5,-1)--(5,-2);
      \fill(3,1) circle(0.04cm); \node [right] at (3,1){  $(1,n)$};
      \fill(3.5,2) circle(0.04cm); \node [right] at  (3.5,2){  $(2,n)$};
      \fill(4,1) circle(0.04cm);  \node [right] at (4,1){  $(3,n)$};
      \fill(4.5,2) circle(0.04cm);   \node [right] at (4.5,2){  $(4,n)$};
      \fill(5,1) circle(0.04cm);  \node [right] at (5,1) {  $(5,n)$};
      \fill(3,-1) circle(0.04cm); \node [right] at (3,-1) {  $(1,2)$};
      \fill(3.5,0) circle(0.04cm); \node [right] at (3.5,0) {  $(2,2)$};
      \fill(4,-1) circle(0.04cm); \node [right] at (4,-1) {  $(3,2)$};
      \fill(4.5,0) circle(0.04cm);  \node [right] at (4.5,0) {  $(4,2)$};
      \fill(5,-1) circle(0.04cm); \node [right] at (5,-1) {  $(5,2)$};
      \fill(3,-2) circle(0.04cm);  \node [right] at (3,-2) {  $(1,1)$};
      \fill(3.5,-1) circle(0.04cm); \node [right] at (3.5,-1) {  $(2,1)$};
      \fill(4,-2) circle(0.04cm); \node [right] at (4,-2) {  $(3,1)$};
      \fill(4.5,-1) circle(0.04cm);\node [right] at (4.5,-1) {  $(4,1)$};
      \fill(5,-2) circle(0.04cm); \node [right] at(5,-2) {  $(5,1)$};

      \fill(3.7,0.5) circle(0.02cm); \fill(4,0.5) circle(0.02cm); \fill(4.3,0.5) circle(0.02cm);
      \fill(5.2,1.5) circle(0.02cm); \fill(5.5,1.5) circle(0.02cm); \fill(5.8,1.5) circle(0.02cm);
      \fill(5.2,0.5) circle(0.02cm); \fill(5.5,0.5) circle(0.02cm); \fill(5.8,0.5) circle(0.02cm);
      \fill(5.2,-0.5) circle(0.02cm); \fill(5.5,-0.5) circle(0.02cm); \fill(5.8,-0.5) circle(0.02cm);
      \fill(5.2,-1.5) circle(0.02cm); \fill(5.5,-1.5) circle(0.02cm); \fill(5.8,-1.5) circle(0.02cm);

      \draw (6,1)--(6.5,2);\draw (6,-1)--(6.5,0);\draw (6,-2)--(6.5,-1);
      \draw (6,1)--(6,0.7); \draw  (6,-0.3)--(6,-1)--(6,-2);
      \draw (6.5,2)--(6.5,1.3); \draw  (6.5,0.3)--(6.5,0)--(6.5,-1);
      \fill(6,1) circle(0.04cm);  \node [right] at (6,1) {  $(m-1,n)$};
      \fill(6.5,2) circle(0.04cm); \node [right] at (6.5,2) {   $(m,n)$};
      \fill(6,-1) circle(0.04cm);  
      \fill(6.5,0) circle(0.04cm); \node [right] at (6.5,0) {   $(m,2)$};
      \fill(6,-2) circle(0.04cm); \node [right] at (6,-2) {   $(m-1,1)$};
      \fill(6.5,-1) circle(0.04cm); \node [right] at (6.5,-1) {   $(m,1)$};
    \coordinate[label=below:\small $m$ even] (1) at (5,-2.5);
        \end{tikzpicture}
    \end{minipage}%
    \begin{minipage}{0.5\textwidth}
        \centering
        \begin{tikzpicture}[scale=1.5]
     \draw (3,1)--(3.5,2)--(4,1)--(4.5,2)--(5,1);
      \draw (3,-1)--(3.5,0)--(4,-1)--(4.5,0)--(5,-1);
      \draw (3,-2)--(3.5,-1)--(4,-2)--(4.5,-1)--(5,-2);
      \draw (3,1)--(3,0.7); \draw  (3,-0.3)--(3,-1)--(3,-2);
      \draw (3.5,2)--(3.5,1.3); \draw  (3.5,0.3)--(3.5,0)--(3.5,-1);
      \draw (4,1)--(4,0.7); \draw  (4,-0.3)--(4,-1)--(4,-2);
      \draw (4.5,2)--(4.5,1.3); \draw  (4.5,0.3)--(4.5,0)--(4.5,-1);
      \draw (5,1)--(5,0.7); \draw  (5,-0.3)--(5,-1)--(5,-2);
      \fill(3,1) circle(0.04cm); \node [right] at (3,1){  $(1,n)$};
      \fill(3.5,2) circle(0.04cm); \node [right] at  (3.5,2){  $(2,n)$};
      \fill(4,1) circle(0.04cm);  \node [right] at (4,1){  $(3,n)$};
      \fill(4.5,2) circle(0.04cm);   \node [right] at (4.5,2){  $(4,n)$};
      \fill(5,1) circle(0.04cm);  \node [right] at (5,1) {  $(5,n)$};
      \fill(3,-1) circle(0.04cm); \node [right] at (3,-1) {  $(1,2)$};
      \fill(3.5,0) circle(0.04cm); \node [right] at (3.5,0) {  $(2,2)$};
      \fill(4,-1) circle(0.04cm); \node [right] at (4,-1) {  $(3,2)$};
      \fill(4.5,0) circle(0.04cm);  \node [right] at (4.5,0) {  $(4,2)$};
      \fill(5,-1) circle(0.04cm); \node [right] at (5,-1) {  $(5,2)$};
      \fill(3,-2) circle(0.04cm);  \node [right] at (3,-2) {  $(1,1)$};
      \fill(3.5,-1) circle(0.04cm); \node [right] at (3.5,-1) {  $(2,1)$};
      \fill(4,-2) circle(0.04cm); \node [right] at (4,-2) {  $(3,1)$};
      \fill(4.5,-1) circle(0.04cm);\node [right] at (4.5,-1) {  $(4,1)$};
      \fill(5,-2) circle(0.04cm); \node [right] at(5,-2) {  $(5,1)$};

      \fill(3.7,0.5) circle(0.02cm); \fill(4,0.5) circle(0.02cm); \fill(4.3,0.5) circle(0.02cm);
      \fill(5.2,1.5) circle(0.02cm); \fill(5.5,1.5) circle(0.02cm); \fill(5.8,1.5) circle(0.02cm);
      \fill(5.2,0.5) circle(0.02cm); \fill(5.5,0.5) circle(0.02cm); \fill(5.8,0.5) circle(0.02cm);
      \fill(5.2,-0.5) circle(0.02cm); \fill(5.5,-0.5) circle(0.02cm); \fill(5.8,-0.5) circle(0.02cm);
      \fill(5.2,-1.5) circle(0.02cm); \fill(5.5,-1.5) circle(0.02cm); \fill(5.8,-1.5) circle(0.02cm);

      \draw (6,2)--(6.5,1);\draw (6,0)--(6.5,-1);\draw (6,-1)--(6.5,-2);
      \draw (6,2)--(6,1.3); \draw  (6,0.3)--(6,0)--(6,-1);
      \draw (6.5,1)--(6.5,0.7); \draw  (6.5,-0.3)--(6.5,-1)--(6.5,-2);
      \fill(6,2) circle(0.04cm);  \node [right] at (6,2) {  $(m-1,n)$};
      \fill(6.5,1) circle(0.04cm); \node [right] at (6.5,1) {   $(m,n)$};
      \fill(6,0) circle(0.04cm);  \node [right] at (6,0) {   $(m-1,2)$};
      \fill(6.5,-1) circle(0.04cm); \node [right] at (6.5,-1) {   $(m,2)$};
      \fill(6,-1) circle(0.04cm); 
      \fill(6.5,-2) circle(0.04cm); \node [right] at (6.5,-2) {   $(m,1)$};
    \coordinate[label=below:\small $m$ odd] (1) at (5,-2.5);
        \end{tikzpicture}
    \end{minipage}
  \caption{The poset $Z_{m} \times \mathbf{n}$.}
  \label{Zmn}
\end{figure}
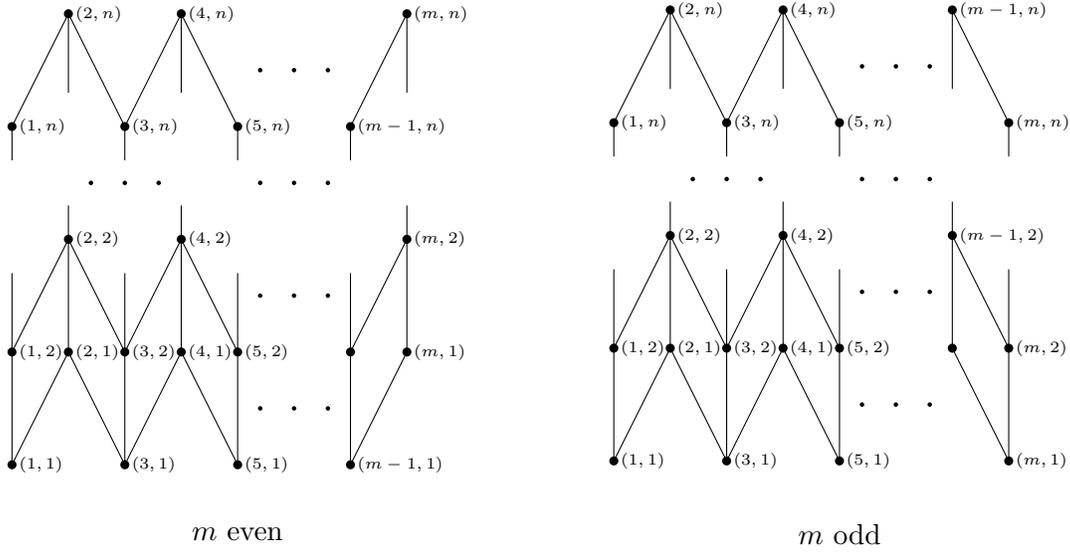

It is known that, for $m\ge 2$, the size of  $J(Z_m)$ equals to the Fibonacci number $F_{m+2}$, which is defined recursively as $F_1=F_2=1$ and $F_{n+1}=F_n+F_{n-1}$ for  any $n\geq 2$; see Stanley \cite[Ch. 3 Ex. 23.a]{Stanley1997Enumerative}.
The enumeration of $J(Z_m \times \mathbf{n})$ was studied by Berman and K\"ohler \cite{Berman1976Cardinalities}.
The following theorem reveals their connection with the enumeration of alternating words. We shall give two proofs of it here,  a bijective proof and an enumerative  proof.

\begin{thm}\label{enum-down-up-words}
 For any $k \ge 2$ and $\ell \ge 2$, the number $N_{k,\ell}$ of down-up words over $[k]$ of length $\ell$ is equal to  the number of order ideals of  $Z_\ell \times (\mathbf{k-2})$.
\end{thm}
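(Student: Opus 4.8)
My plan is to prove the stated identity bijectively, realizing it as a composition of two elementary correspondences: a standard one turning order ideals of $Z_\ell\times(\mathbf{k-2})$ into bounded order-reversing labelings of $Z_\ell$, and an explicit ``parity shift'' map turning weak alternating labelings into strict down-up words. (I would also note that an enumerative alternative is available by matching both quantities against the generating-function/recurrence data of Berman and K\"ohler~\cite{Berman1976Cardinalities}, but the bijection is cleaner and self-contained.)

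First I would record the standard fact that, for any finite poset $P$ and the chain $\mathbf{n}$, an order ideal $I$ of $P\times\mathbf{n}$ is completely determined by the function $f(p)=|\{\,j:(p,j)\in I\,\}|$. Indeed, for fixed $p$ the set $\{\,j:(p,j)\in I\,\}$ is a down-set of the chain, hence an initial segment $\{1,\dots,f(p)\}$ with $f(p)\in\{0,1,\dots,n\}$; and the ideal condition in the $P$-direction says precisely that $p'\le_P p$ forces $f(p')\ge f(p)$. Thus $I\mapsto f$ is a bijection from $J(P\times\mathbf{n})$ onto the set of order-reversing maps $f:P\to\{0,1,\dots,n\}$. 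Taking $P=Z_\ell$ and $n=k-2$, and reading off the covering relations $2i-1<_{Z_\ell}2i>_{Z_\ell}2i+1$, I would conclude that $J(Z_\ell\times(\mathbf{k-2}))$ is in bijection with sequences $f_1f_2\cdots f_\ell$ satisfying the \emph{weak} down-up pattern $f_1\ge f_2\le f_3\ge f_4\le\cdots$ with each $f_i\in\{0,1,\dots,k-2\}$ (an alphabet of size $k-1$).

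The key step is then to biject these weak down-up sequences with the (strict) down-up words $w_1w_2\cdots w_\ell$ over $[k]$. I would define $\phi(f)=w$ by the parity-dependent offset
$$
w_i=\begin{cases} f_i+2, & i \text{ odd},\\[2pt] f_i+1, & i \text{ even},\end{cases}
$$
whose effect is to turn each weak inequality into a strict one while keeping the letters in range: for odd (peak) positions $2\le w_i\le k$ and for even (valley) positions $1\le w_i\le k-1$, so $w\in[k]^\ell$. One checks directly that $w_1>w_2$ iff $f_1\ge f_2$, that $w_2<w_3$ iff $f_2\le f_3$, and so on down the sequence, so $w$ is exactly down-up; the inverse is $f_i=w_i-2$ at odd positions and $f_i=w_i-1$ at even positions, which lands in $\{0,\dots,k-2\}$ precisely because a peak satisfies $w_i\ge 2$ and a valley satisfies $w_i\le k-1$. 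Composing with the Step-1 bijection yields $N_{k,\ell}=|J(Z_\ell\times(\mathbf{k-2}))|$.

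The main obstacle, and the only place demanding care, is calibrating this last shift: the offsets must differ by exactly one between consecutive positions (larger at peaks, smaller at valleys) so that the strict inequalities are forced while the alphabet grows by exactly one letter, from size $k-1$ to size $k$. Getting the orientation right—matching the even-indexed \emph{maxima} of the poset $Z_\ell$ with the even-indexed \emph{minima} (valleys) of a down-up word, via order-reversal—is the conceptual subtlety; once that alignment is fixed, verifying well-definedness, the range bounds $1\le w_i\le k$, and invertibility is routine. I would also remark that the per-position offset is independent of $\ell$, so the differing shapes of $Z_\ell$ for even versus odd $\ell$ (Figure~\ref{Zm}) cause no complication at the top element.
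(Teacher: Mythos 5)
Your proposal is correct and is essentially the paper's own bijective proof: your parity shift $w_i=f_i+2$ (odd) / $w_i=f_i+1$ (even) is exactly the paper's map $\Phi$ (which sets $\alpha_i=w_i-2$ at odd and $\alpha_i=w_i-1$ at even positions), with your preliminary identification of $J(Z_\ell\times(\mathbf{k-2}))$ with order-reversing maps $f:Z_\ell\to\{0,1,\dots,k-2\}$ merely packaging the paper's direct well-definedness check in a standard, modular form. The one point you could make explicit is that your range bounds ($w_i\ge 2$ at peaks, $w_i\le k-1$ at valleys) need every position to have at least one neighbor, i.e.\ $\ell\ge 2$, which is precisely the theorem's hypothesis (and explains why the count fails at $\ell=1$, where $N_{k,1}=k$ but there are only $k-1$ such maps).
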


\noindent \textbf{Bijective Proof.}
Let $\mathcal{W}_{k,\ell}$ denote the set of down-up words over $[k]$ of length $\ell$. We shall build a bijection between $\mathcal{W}_{k,\ell}$ and ${J}(Z_\ell \times (\mathbf{k-2}))$.

 We first define a map $\Phi:\mathcal{W}_{k,\ell}\rightarrow {J}(Z_\ell \times (\mathbf{k-2}))$.  Given a down-up word $w=w_1 w_2 \cdots w_{\ell}$, we define the word $\alpha=\alpha_1 \alpha_2 \cdots \alpha_{\ell}$ as follows:
 $$\alpha_i=\left\{
\begin{array}{ll}
w_i-2,& \mbox{ if } i \mbox{ is odd},\\[3mm]
w_i-1,& \mbox{ if } i \mbox{ is even},
\end{array}\right.
$$
where $1\leq i\leq \ell$.
Then let $$\Phi(w)=\{(i,\beta_j)  : 1\leq i\leq \ell , \ 1 \le \beta_j \le \alpha_i\}.$$
For example, let $k=4$ and $\ell=7$, and consider the word $w=3241423$. Then,  $\alpha=1120211$ and thus $\Phi(w)=\{(1,1),(2,1),(3,1),(3,2),(5,1),(5,2),(6,1),
(7,1)\}$, which is an order ideal of $Z_7 \times \mathbf{2}$.

We need to show that this map is well defined.
It suffices to prove that $\Phi(w)$ is an order ideal of $Z_\ell \times (\mathbf{k-2})$, that is to say that,
if $(i',j')\leq (i,j)$ and $(i,j)\in \Phi(w)$ then $(i',j')\in \Phi(w)$.
From the definition of the order of $Z_\ell \times (\mathbf{k-2})$, we have that $i'\le_{Z_{\ell}} i$ and $j'\le j$.
Now, we divide the situation into two cases: $i'=i$ and $i'<_{Z_\ell}i$.
For the case $i'=i$, the argument is obviously true from the construction of $\Phi(w)$. We just need to consider the case $i'<_{Z_\ell}i$.
At this time, $i$ must be even,  and $i'$ can only be $i-1$ or $i+1$.
Since $(i,j)\in \Phi(w)$, we have that $\alpha_i\geq j$ and thus $w_i\geq j+1$.
From the fact that $w$ is a down-up word, it follows that  $w_{i'}> w_i$. Hence, $w_{i'} \ge j+2$ and thus  $\alpha_{i'}\geq j$.
From the construction of $\Phi(w)$, we obtain that  $(i',j')\in \Phi(w)$ for all $j'\leq j$, as desired.

 Next, we define a map $\Psi: {J}(Z_\ell \times (\mathbf{k-2})) \rightarrow \mathcal{W}_{k,\ell}$.
 Given an order ideal $I$ of $Z_\ell \times (\mathbf{k-2})$,
 we define a word  $\gamma=\gamma_1 \gamma_2 \cdots \gamma_{\ell}$ as follows. For each $1\leq i\leq\ell$, if there exists at least one $j$ such that $(i,j)\in I$, then let $\gamma_i$ be the maximum $j$. Otherwise, we let $\gamma_i$=0.
 The corresponding word $\Psi(I)$ is defined as $(2+\gamma_1)(1+\gamma_2)(2+\gamma_3)(1+\gamma_4)\cdots$.
  For exmaple, if $I=\{(1,1),(2,1),(3,1),(3,2),(5,1),(5,2),(6,1),
(7,1)\}$,
 then $\gamma=1120211$ and thus $w=3241423$.

 It is easy to see that, for any even integer $i$, we have $\gamma_i\leq \gamma_{i+1}$ and $\gamma_i\leq \gamma_{i-1}$, since $I$ is an order ideal. From the construction of $\Psi(I)$, we see that
 it is a down-up word.

Finally, it is not difficult to  prove that $\Psi \circ \Phi = id$ and $\Phi \circ \Psi = id$. Hence $\Phi$ is a bijection. This completes our bijective proof.\\

\noindent{\bf Enumerative  Proof.}
We first prove that the numbers in question satisfy the following recurrence relation, for $k\ge 3$ and $\ell\ge 2$,
\begin{align}\label{eq-N-rec}
 N_{k,\ell} = N_{k-1,\ell} + \sum_{i=0}^{\lfloor \frac{\ell-1}{2}\rfloor} N_{k-1,2i} N_{k,\ell-2i-1} - \delta_{\ell \text{ is even}} N_{k-1,\ell-2},
\end{align}
with the initial conditions
$N_{k, 0}=1$, $N_{k, 1}=k$ for $k\ge 2$, and $N_{2,\ell}=1$ for $\ell \ge 2$. To this end, we note that any down-up word $w$  over $[k]$ of length $\ell$ belongs to one of the following two cases.\\

\noindent
{\bf Case 1:} $w$ does not contain the letter $k$. Then the number we count is that of down-up words  over the alphabet $[k-1]$ of length $\ell$, which is $N_{k-1,\ell}$. This corresponds to the first term on the righthand side of \eqref{eq-N-rec}.\\

\noindent
{\bf Case 2:} $w$ is of the form $w_1kw_2$, where $w_1$ is a down-up word of even length over $[k-1]$, and $w_2$ is an up-down word over $[k]$. Note that the number of up-down words equals to  that of down-up words, as mentioned above. This corresponds to the second term on the right hand side of \eqref{eq-N-rec}.
The only exception occurs when the subword after the leftmost letter $k$ is of length one. It can be any letter in $[k-1]$, but $N_{k, 1}=k$. So, an additional term occurs, which fixes this.  In these cases, $\ell-2i-1$ equals 1, which means that $\ell$ is even. This completes the proof of \eqref{eq-N-rec}.

Now, let us denote the number of order ideals of  $Z_\ell \times \mathbf{k}$ by $M_{k,\ell}$.
We note that
Berman and K\"ohler \cite[Example 2.3]
{Berman1976Cardinalities} studied a similar recurrence for $M_{k,\ell}$, which is,
for $k\ge 1$ and $\ell\ge 1$,
\begin{align*}
 M_{k,\ell} = M_{k-1,\ell} + \sum_{i=0}^{\lfloor \frac{\ell-1}{2}\rfloor} M_{k-1,2i} M_{k,\ell-2i-1},
\end{align*}
with the initial conditions
$M_{k, 0}=1$  for $k\ge0$ and $M_{0,\ell}=1$ for $\ell\ge1$.

Owing to their akin recurrence relations, we made a minor change to the number $N_{k,\ell}$ to complete the proof.
We let $\widetilde{N}_{k,\ell}$ be $N_{k,\ell}$ except $\widetilde{N}_{k,1}=k-1$.  One can easily check that,
for $k\ge 3$ and $\ell\ge 1$,
\begin{align*}
 \widetilde{N}_{k,\ell} = \widetilde{N}_{k-1,\ell} + \sum_{i=0}^{\lfloor \frac{\ell-1}{2}\rfloor} \widetilde{N}_{k-1,2i} \widetilde{N}_{k,\ell-2i-1} ,
\end{align*}
with the initial conditions
$\widetilde{N}_{k, 0}=1$ for $k\ge2$ and $\widetilde{N}_{2,\ell}=1$ for $\ell\ge1$.
It follows immediately that, for $k\ge2$ and $\ell\ge0$, $$\widetilde{N}_{k, \ell} = M_{k-2,\ell},$$ since they have the same initial conditions and recurrence relations.
Together with the fact $N_{k, \ell} =\widetilde{N}_{k, \ell}$ except $\ell=1$, we obtain that
$$N_{k, \ell}=M_{k-2,\ell}$$
for $k\ge2$ and $\ell\ge2$. This completes our enumerative proof.
\qed

As an immediate corollary of Theorem~\ref{enum-down-up-words}, we have the following statement. 

\begin{thm} For $k\geq 3$ and $\ell\geq 2$, the numbers $N_{k,\ell}$ of down-up (equivalently, up-down) words of length $\ell$ over $[k]$ satisfy (\ref{eq-N-rec}) with the initial conditions $N_{k,0}=1$, $N_{k,1}=k$ for $k\geq 2$, and $N_{2,\ell}=1$ for $\ell\geq 2$. \end{thm}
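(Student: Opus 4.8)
The plan is to obtain the recurrence by transporting, through Theorem~\ref{enum-down-up-words}, a recurrence that already holds for the order-ideal counts, rather than re-proving it combinatorially. By Theorem~\ref{enum-down-up-words} we have $N_{k,\ell}=|J(Z_\ell\times(\mathbf{k-2}))|=M_{k-2,\ell}$ for $k\ge 2$ and $\ell\ge 2$, and more generally $\widetilde N_{k,\ell}=M_{k-2,\ell}$, where $\widetilde N$ is the normalized sequence agreeing with $N$ except that $\widetilde N_{k,1}=k-1$. First I would substitute $k\mapsto k-2$ into the Berman--K\"ohler recurrence for $M_{k,\ell}$ (legitimate once $k-2\ge 1$, i.e. $k\ge 3$) and rewrite each $M_{j,\ell}$ as $\widetilde N_{j+2,\ell}$, which yields
$$\widetilde N_{k,\ell}=\widetilde N_{k-1,\ell}+\sum_{i=0}^{\lfloor\frac{\ell-1}{2}\rfloor}\widetilde N_{k-1,2i}\,\widetilde N_{k,\ell-2i-1}.$$
This is the correction-free form; what remains is to pass from $\widetilde N$ back to $N$.

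Next I would reconcile the two sequences term by term on the right-hand side. Since $\widetilde N$ and $N$ differ only at second argument $1$, the factors $\widetilde N_{k-1,\ell}$ (as $\ell\ge 2$) and $\widetilde N_{k-1,2i}$ (even second argument) already equal their $N$-counterparts. The sole discrepancy is the factor $\widetilde N_{k,\ell-2i-1}$ in the case $\ell-2i-1=1$, which occurs for exactly one index $i=(\ell-2)/2$ and only when $\ell$ is even; there $\widetilde N_{k,1}=k-1$ while $N_{k,1}=k$, and the accompanying factor equals $N_{k-1,\ell-2}$. Hence replacing every $\widetilde N$ by $N$ increases the sum by precisely $\delta_{\ell\text{ is even}}\,N_{k-1,\ell-2}$, so that subtracting this quantity restores equality and, using $N_{k,\ell}=\widetilde N_{k,\ell}$ for $\ell\ge 2$, gives exactly~\eqref{eq-N-rec}. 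The initial conditions are read off directly: $N_{k,0}=1$ (empty word), $N_{k,1}=k$ (single letters), and $N_{2,\ell}=1$ for $\ell\ge 2$ (the unique down-up word $2121\cdots$ over $[2]$).

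I expect the one delicate point to be this final bookkeeping: verifying that the equation $\ell-2i-1=1$ is solvable for a single in-range $i$ and only for even $\ell$, so that the $N$-versus-$\widetilde N$ mismatch contributes the single correction term $-\delta_{\ell\text{ is even}}N_{k-1,\ell-2}$ and nothing else. Everything else — the index shift in the Berman--K\"ohler recurrence and the evaluation of the boundary values — is routine. As a consistency check, the same correction term can be seen to arise in the direct combinatorial derivation inside the enumerative proof of Theorem~\ref{enum-down-up-words}, where it compensates for the exceptional case in which the subword following the leftmost letter $k$ has length one.
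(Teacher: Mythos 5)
Your proof is correct, but it runs in the opposite logical direction to the paper's. The paper establishes \eqref{eq-N-rec} first, by a direct combinatorial decomposition of a down-up word $w$ over $[k]$: either $w$ avoids the letter $k$ (contributing $N_{k-1,\ell}$), or $w=w_1kw_2$ with $w_1$ a down-up word of even length over $[k-1]$ and $w_2$ an up-down word over $[k]$ (contributing the convolution sum), with the term $-\delta_{\ell \text{ is even}}N_{k-1,\ell-2}$ correcting the count when $w_2$ has length one, where $N_{k,1}=k$ rather than $k-1$; only afterwards does the paper compare with the Berman--K\"ohler recurrence to deduce $N_{k,\ell}=M_{k-2,\ell}$ inside the enumerative proof of Theorem~\ref{enum-down-up-words}. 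You instead take $N_{k,\ell}=M_{k-2,\ell}$ as given and transport the Berman--K\"ohler recurrence back to $N$, with the correction term emerging from the single mismatch $\widetilde N_{k,1}=k-1$ versus $N_{k,1}=k$. This is legitimate precisely because the paper also supplies a \emph{bijective} proof of Theorem~\ref{enum-down-up-words}, independent of any recurrence; had only the enumerative proof been available, your argument would be circular, since that proof derives \eqref{eq-N-rec} en route to the identity you invoke. One point you should make explicit: Theorem~\ref{enum-down-up-words} gives $N_{k,\ell}=M_{k-2,\ell}$ only for $\ell\ge 2$, whereas your substituted recurrence also uses the identifications at second argument $0$ and $1$; these are indeed routine ($M_{k-2,0}=1$ by the Berman--K\"ohler initial condition, and $M_{k-2,1}=k-1$ because $Z_1\times(\mathbf{k-2})$ is a $(k-2)$-element chain, matching $\widetilde N_{k,0}=1$ and $\widetilde N_{k,1}=k-1$), but they are not literally covered by the theorem you cite. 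With that check added, your bookkeeping — the unique in-range index $i=(\ell-2)/2$ with $\ell-2i-1=1$, existing only for even $\ell$, with accompanying factor $N_{k-1,\ell-2}$ — is exactly right. The trade-off is clear: the paper's argument is self-contained and explains the correction term combinatorially (the exceptional length-one tail after the leftmost $k$), while yours is shorter given the bijection but leans on the external Berman--K\"ohler recurrence and on the bijective proof of Theorem~\ref{enum-down-up-words}.
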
 

Note that the Fibonacci numbers have the following recurrence relations \cite[pp. 5--6]{Vorobiev2002Fibonacci}:
$$F_{2n}=\sum_{i=0}^{n-1}F_{2i+1}, \quad F_{2n+1}=1+\sum_{i=1}^{n}F_{2i}.$$
Using \eqref{eq-N-rec} and the fact that $N_{2,\ell}=1$ for $\ell\geq 2$, one can prove the following statement.

\begin{thm} For $\ell\geq 2$, $N_{3,\ell}=F_{\ell+2}$, the $(\ell+2)$th Fibonacci number. \end{thm}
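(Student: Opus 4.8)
The quickest route is to specialize Theorem~\ref{enum-down-up-words} rather than to manipulate the recurrence. Taking $k=3$ gives $N_{3,\ell}=|J(Z_\ell\times\mathbf{1})|$ for $\ell\ge2$; since $\mathbf{1}$ is the one-element chain, $Z_\ell\times\mathbf{1}\cong Z_\ell$ as posets, so $J(Z_\ell\times\mathbf{1})\cong J(Z_\ell)$. The quoted fact that $|J(Z_m)|=F_{m+2}$ for $m\ge2$ then yields $N_{3,\ell}=|J(Z_\ell)|=F_{\ell+2}$ at once, the hypothesis $\ell\ge2$ being exactly what this quoted fact requires. This is the argument I would record first.

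If instead one follows the recurrence route indicated in the text (thereby avoiding the Fibonacci count of $J(Z_m)$), the plan is strong induction on $\ell$. First I would specialize \eqref{eq-N-rec} to $k=3$. Because $N_{2,0}=1$ and $N_{2,\ell}=1$ for $\ell\ge2$, we have $N_{2,2i}=1$ for every $i\ge0$, so the convolution collapses to an ordinary sum. For even $\ell=2n$ the leading $N_{2,\ell}=1$ and the subtracted term $N_{2,\ell-2}=1$ cancel, leaving
\[
N_{3,2n}=\sum_{i=0}^{n-1}N_{3,2i+1}
\]
(a sum over the odd lengths $1,3,\dots,2n-1$), while for odd $\ell=2n+1$ there is no $\delta$-term and $N_{2,\ell}=1$, giving
\[
N_{3,2n+1}=1+\sum_{i=0}^{n}N_{3,2i}
\]
(a sum over the even lengths $0,2,\dots,2n$). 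These are visibly the shapes of the two quoted identities $F_{2n}=\sum_{i=0}^{n-1}F_{2i+1}$ and $F_{2n+1}=1+\sum_{i=1}^{n}F_{2i}$, so the induction should close by a term-by-term comparison.

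The genuine difficulty, and the reason the statement is restricted to $\ell\ge2$, is the off-by-one at length one: the true seed value is $N_{3,1}=3$, whereas $F_3=2$, so $N_{3,1}=F_3+1$, even though $N_{3,0}=1=F_2$ behaves correctly. I would carry the constant $N_{3,1}=F_3+1$ through the induction and check that the spurious $+1$ is absorbed each time. In the even case the term $N_{3,1}$ supplies the missing $F_1=1$, so that $\sum_{i=0}^{n-1}N_{3,2i+1}=1+\sum_{i=1}^{n}F_{2i+1}=F_{2n+2}$; in the odd case $N_{3,0}=F_2$ reinstates the $i=0$ summand and the explicit $+1$ lines up with the $+1$ in the Fibonacci identity, so that $N_{3,2n+1}=1+\sum_{i=1}^{n+1}F_{2i}=F_{2n+3}$. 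Confirming these two cancellations, together with the base case $N_{3,2}=N_{3,1}=3=F_4$, is the entire content of the proof; everything else is reindexing.
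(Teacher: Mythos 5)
Your proposal is correct, and in fact it contains the paper's own argument as its second half. The paper proves this theorem exactly by the recurrence route you describe: it quotes the identities $F_{2n}=\sum_{i=0}^{n-1}F_{2i+1}$ and $F_{2n+1}=1+\sum_{i=1}^{n}F_{2i}$ and then merely asserts that the statement follows from \eqref{eq-N-rec} and $N_{2,\ell}=1$ for $\ell\geq 2$. Your execution fills in precisely the two points that sketch leaves implicit: the collapse of the convolution because $N_{2,2i}=1$ for all $i\geq 0$ (including the cancellation of the leading $N_{2,\ell}$ against the $\delta$-term when $\ell$ is even), and — the real crux — the treatment of the anomalous seed $N_{3,1}=3=F_3+1$, whose spurious $+1$ is exactly the missing $F_1$ in $F_{2n+2}=\sum_{i=0}^{n}F_{2i+1}$; I checked both closures ($N_{3,2n}=F_{2n+2}$ and $N_{3,2n+1}=F_{2n+3}$) and they are right. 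Your first route is genuinely different from what the paper does: specializing Theorem~\ref{enum-down-up-words} to $k=3$ gives $N_{3,\ell}=|J(Z_\ell\times\mathbf{1})|=|J(Z_\ell)|=F_{\ell+2}$, using the fact $|J(Z_m)|=F_{m+2}$ for $m\geq 2$ that the paper itself quotes from Stanley but never invokes for this theorem. That route is shorter and explains the hypothesis $\ell\geq 2$ structurally (it is the range of the quoted Fibonacci count of $J(Z_m)$), at the cost of importing the full strength of the bijection in Theorem~\ref{enum-down-up-words}; the recurrence route is longer but self-contained given \eqref{eq-N-rec} and the two classical Fibonacci summation identities.
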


\section{Enumeration of 123-avoiding up-down words}\label{123-up-down-sec}

In this section, we consider the enumeration of 123-avoiding up-down words. Denote $A_{k,\ell}$ the number of 123-avoiding up-down words  of length $\ell$ over the alphabet $[k]$, and $A_{k, \ell}^j$ the number of those words counted by $A_{k,\ell}$ that end with $j$.

\subsection{Explicit enumeration}

It is easy to see that
\begin{align}
 A_{k,2i}=\sum_{j=2}^{k} A_{k,2i}^j.
\end{align}

Next, we deal with the enumeration of $A^j_{k,2i}$. In what follows, for a word $w$, we have $\{w\}^*=\{\epsilon, w,ww,www,\ldots\}$, where $\epsilon$ is the empty word, and $\{w\}^+=\{w,ww,www,\ldots\}$.

\begin{lem}
For $k\ge 3$ and $2\le j\le k$, the numbers  $A_{k,2i}^{j}$ satisfy the following recurrence relation,
\begin{align}\label{rec}
A_{k,2i}^{j}
=  \sum_{i'=1}^{i} \left( A_{k-1, 2i'}^{j-1} - A_{k-1, 2i'}^{j}   + A_{k,2i'}^{j+1} \right),
\end{align}
with the boundary condition $A_{k,2i}^{k}= \binom{i+k-2}{i}$.
Furthermore, an explicit formula for $A^j_{k,2i}$ is
\begin{align}\label{A^j}
A^j_{k,2i}=\frac{j-1}{k-1} \binom{i+k-2}{i}  \binom{i+k-j-1}{i-1}.
\end{align}
\end{lem}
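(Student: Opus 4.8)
The plan is to first replace the pattern condition by a transparent structural description of the objects being counted, and only then do any enumeration. Write a word of even length $2i$ as an interleaving of its \emph{valleys} $v_t=w_{2t-1}$ and \emph{peaks} $p_t=w_{2t}$, $1\le t\le i$. I claim that $w$ is a $123$-avoiding up-down word over $[k]$ if and only if the valley sequence $v_1\ge v_2\ge\cdots\ge v_i$ and the peak sequence $p_1\ge p_2\ge\cdots\ge p_i$ are both weakly decreasing and $v_t<p_t$ for every $t$. For necessity, note that any two ascending valleys $v_a<v_b$ (with $a<b$) give the occurrence $v_a<v_b<p_b$ at positions $2a-1<2b-1<2b$, and any two ascending peaks $p_b<p_c$ (with $b<c$) give $v_b<p_b<p_c$ at positions $2b-1<2b<2c$; hence both sequences must be weakly decreasing. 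For sufficiency, observe that in such a word no strictly increasing subsequence can use two valleys or two peaks, so it has length at most $2$, while the descents $p_t>v_{t+1}$ hold automatically because $p_t\ge p_{t+1}>v_{t+1}$. Consequently $A^j_{k,2i}$ equals the number of pairs of weakly decreasing sequences $(v_t),(p_t)$ of length $i$ with $1\le v_t<p_t\le k$ and $p_i=j$.

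With this reformulation the boundary value is immediate: if $p_i=k$ then, since $(p_t)$ is weakly decreasing with entries at most $k$, all peaks equal $k$, while the valleys form an arbitrary weakly decreasing sequence of length $i$ over $[k-1]$; the number of such sequences is the number of size-$i$ multisets from $k-1$ letters, namely $\binom{i+k-2}{i}$, which is the asserted value of $A^k_{k,2i}$. To prove the recurrence \eqref{rec} I would introduce the length-partial sums $S^j_{k,i}=\sum_{i'=1}^{i}A^j_{k,2i'}$ and rewrite \eqref{rec} as the identity $A^j_{k,2i}=S^{j-1}_{k-1,i}-S^j_{k-1,i}+S^{j+1}_{k,i}$, valid for $2\le j\le k-1$, the case $j=k$ being covered by the boundary (a direct check shows \eqref{rec} itself fails at $j=k$, so this segregation is essential). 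This identity is to be proved combinatorially by splitting a counted word according to whether it uses the largest letter $k$ and, when it does, peeling off the maximal initial block of peaks equal to $k$; the three terms are then expected to account for the words over $[k-1]$, the reattachment of the peeled block, and an inclusion--exclusion correction. Matching this decomposition to the exact signed form of \eqref{rec} --- in particular producing the negative term $-S^j_{k-1,i}$ and correctly isolating the $j=k$ boundary --- is the delicate bookkeeping step and the main obstacle of the proof.

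Finally, with recurrence \eqref{rec} and the boundary condition in hand, the numbers $A^j_{k,2i}$ are uniquely determined, so it suffices to verify that the closed form \eqref{A^j} satisfies both. The boundary check is the computation noted above, and substituting \eqref{A^j} into \eqref{rec} reduces, after evaluating the two partial sums over $i'$ in closed form, to a Chu--Vandermonde convolution; I would organize this as an induction on $k$ with an inner induction on $i$. Alternatively, and perhaps more cleanly, the formula can be obtained directly from the characterization: a pair of weakly decreasing sequences with $v_t<p_t$ is exactly a column-strict filling of a $2\times i$ rectangle (a rectangular semistandard Young tableau), equivalently a pair of non-intersecting lattice paths, and fixing $p_i=j$ pins down one pair of endpoints. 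The Lindstr\"om--Gessel--Viennot lemma then expresses $A^j_{k,2i}$ as a $2\times2$ determinant of binomial coefficients, which simplifies to $\frac{j-1}{k-1}\binom{i+k-2}{i}\binom{i+k-j-1}{i-1}$; this independent derivation also confirms \eqref{A^j} without reference to the recurrence.
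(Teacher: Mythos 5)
Your structural characterization (valleys and peaks each weakly decreasing, with $v_t<p_t$ for all $t$) is correct and cleanly proved; it is in fact the description the paper itself invokes without proof in Section~\ref{bijection-sec}, and your boundary computation is equivalent to the paper's, which writes the $j=k$ words as $\{(k-1)k\}^{*}\{(k-2)k\}^{*}\cdots\{1k\}^{*}$, i.e., all peaks equal to $k$ over an arbitrary weakly decreasing valley word on $[k-1]$. Your observation that \eqref{rec} fails at $j=k$ is also right: with the natural conventions $A^{k}_{k-1,2i'}=A^{k+1}_{k,2i'}=0$, the right-hand side evaluates to $\sum_{i'=1}^{i}\binom{i'+k-3}{i'}=\binom{i+k-2}{i}-1$, off by exactly one, so the recurrence should be read as valid for $2\le j\le k-1$ with $j=k$ supplied by the boundary condition --- a genuine (minor) imprecision in the lemma's stated range.

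The genuine gap is that the recurrence \eqref{rec}, which is part of the lemma's claim, is never actually proved in your write-up: you explicitly defer the ``delicate bookkeeping'' of your peel-off-the-letter-$k$ decomposition, and your first fallback (``with recurrence \eqref{rec} and the boundary condition in hand \dots it suffices to verify the closed form satisfies both'') presupposes precisely the unproven step. The paper's device here is worth knowing, because it explains why a direct combinatorial reading of the signed form is awkward: it never interprets \eqref{rec} directly. Instead it classifies words by occurrences of the \emph{smallest} letter $1$ (a maximal trailing block $\{1j\}^{+}$, with the prefix analyzed via its final peak), obtaining the unsigned identity
\begin{align*}
A_{k,2i}^{j}=1+\sum_{i'=1}^{i} A_{k-1,2i'}^{j-1}+\sum_{i'=1}^{i-1}\sum_{j'=j+1}^{k}A_{k,2i'}^{j'},
\end{align*}
and then produces \eqref{rec} by subtracting this identity at $j$ and $j+1$; the minus sign and the vanishing of the $+1$ are artifacts of that differencing, which is also why \eqref{rec} breaks at $j=k$, where the companion identity at $j+1=k+1$ does not exist. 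That said, your proposal is salvageable without repairing the combinatorics: the LGV/semistandard-tableau derivation of \eqref{A^j} is sound (a column-strict $2\times i$ array over $[k]$ with $p_i=j$ pinned; the resulting $2\times 2$ binomial determinant does simplify to the stated product, as spot checks confirm), and once \eqref{A^j} is established independently, the recurrence for the actual numbers follows from your asserted Chu--Vandermonde verification that the closed form satisfies \eqref{rec} --- you merely need to run the logic in that order (formula first, recurrence as a corollary) rather than the reverse. As written, though, neither the determinant evaluation nor the hypergeometric check is carried out, so the two quantitative pillars of the argument remain a plan rather than a proof.
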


\begin{proof}
We first check  the boundary condition.
 When  $j=k$, the words must be of the form $$\{(k-1)k\}^{*}\{(k-2)k\}^{*}\ldots\{2k\}^{*}\{1k\}^{*}.$$
 The structure is dictated by the presence of the rightmost $k$; violating the structure, we will be forced to have an occurrence of the pattern $123$.
 Therefore, $A_{k,2i}^{k}=\binom{i+k-2}{i}$, where we applied the well known formula for the number of solutions of the equation $x_1+\cdots +x_{k-1}=i$ with $x_i\geq 0$ for $1\leq i\leq k-1$.

Now we proceed to deduce the recurrence relation \eqref{rec}.
All the legal words of length $2i$ ending with $j$ can be divided into the following cases according to the occurrence of  the letter $1$:

\noindent
{\bf Case 1:} For the legal words that contain the letter $1$, the letter $1$ must appear in the second last position, since otherwise it would lead to a $123$ pattern. We now divide all the legal words ending with $1j$ into the following subcases:
\\[4pt]
 \indent {\bf   Case 1.1:}  There is only one  word of the form $\{1j\}^{+}$.\\[4pt]
 \indent {\bf   Case 1.2:}  We deal with the words of the form $w\{j'j\}^{+}\{1j\}^{+}$, where $w$ is a legal word and $2\le j'\le j-1$. Note that $w$ cannot contain 1 because of an occurrence of $j'j$. Thus, we consider a legal word over the alphabet set $[2,k]$ of even length  which ends with $j'j$. By subtracting 1 from each letter of this word, we obtain a legal word over $[k-1]$ ending with $(j'-1)(j-1)$. Thus, the number of all words in this case is equal to that of all  words over $[k-1]$ ending with $j-1$, which is $\sum_{i'=1}^{i-1} A_{k-1, 2i'}^{j-1}$. \\[4pt]
 \indent {\bf   Case 1.3:} The others  are  the words of the form  $wj'^{+}\{1j\}^{+}$, where $w$ is a legal word and $j'\ge j+1$. Clearly, the number of such words in this case is $\sum_{i'=1}^{i-1}\sum_{j'=j+1}^{k}A_{k,2i'}^{j'}$. \\[6pt]
\noindent
{\bf Case 2:} We next deal with the legal words ending with $j$ over the alphabet $[k]\setminus \{1\}=\{2,3,\ldots,k\}$.
In this case, it has the same enumeration as that of the legal words over $[k-1]$ ending with $j-1$. The number of such words is $A_{k-1,2i}^{j-1}$.

Thus,  we have the following recurrence relation
\begin{align}\label{formula-A-k-2i-j}
A_{k,2i}^{j}
=  1+\sum_{i'=1}^{i} A_{k-1, 2i'}^{j-1}  + \sum_{i'=1}^{i-1}\sum_{j'=j+1}^{k}A_{k,2i'}^{j'}.
\end{align}
From (\ref{formula-A-k-2i-j}), we have that
\begin{align*}
A_{k,2i}^{j} - A_{k,2i}^{j+1}
=  \sum_{i'=1}^{i} \left( A_{k-1, 2i'}^{j-1}-A_{k-1, 2i'}^{j} \right)  + \sum_{i'=1}^{i-1}A_{k,2i'}^{j+1},
\end{align*}
and therefore the recurrence  \eqref{rec} follows.

Now we deduce the formula \eqref{A^j} for $A_{k,2i}^{j}$.
Let $$A'(k,i,j) = \frac{j-1}{k-1} \binom{i+k-2}{i} \binom{i+k-j-1}{i-1}.$$
We next prove that $A_{k,2i}^{j} = A'(k,i,j)$ by induction on $k-j$ and $k$. We shall show that these numbers  have the same base case and satisfy the same recursion.
Indeed, for $k=j\ge 2$, this fact is obviously true, since $A'(k,i,k)=A^k_{k,2i}$.
We will now check that $A'(k,i,j)$ satisfy the following recurrence relation:
\begin{align}\label{rec-formula-Bs}
A'(k,i,j)
=  \sum_{i'=1}^{i}  \left(  A'(k-1, i',j-1) - A'(k-1, i',j)   +A'(k,i',j+1)  \right).
\end{align}
Indeed, (\ref{rec-formula-Bs}) is true if and only if
\begin{align*}
A'(k,i,j)-A'(k,i-1,j)
=   A'(k-1, i, j-1) - A'(k-1, i, j)   +A'(k, i, j+1),
\end{align*}
while the later equation is easy to check to be true.
This completes the proof.
\end{proof}

Further, the number of 123-avoiding up-down words  of length $2i$ over $[k]$ is
 $$A_{k,2i}=\sum_{j=2}^k\frac{j-1}{k-1} \binom{i+k-2}{i} \binom{i+k-j-1}{i-1}=\frac{1}{i+1} \binom{i+k-2}{i}   \binom{i+k-1}{i}.$$
The last equality can be deduced from the Gosper algorithm \cite{Petkovsek1996AB}.

Now we consider legal words of odd length. For any legal word of length $2i\ (i\ge 1)$ ending with $j\ (2\le j\le k)$, we can adjoin any letter in $[j-1]$ at the end to form an up-down word  of length $2i+1$ over  $[k]$. In fact, such words are necessarily 123-avoiding.
So, we obtain that
\begin{align*}
A_{k,2i+1}= & \sum_{j=2}^k(j-1)A^j_{k,2i}\\
=& \sum_{j=2}^k\frac{(j-1)^2}{k-1} \binom{i+k-2}{i}  \binom{i+k-j-1}{i-1}\\
= & \frac{i+2k-2}{(i+1)(i+2)}\binom{i+k-2}{i}  \binom{i+k-1}{i}.
\end{align*}
Also, the last equality can be deduced from the Gosper algorithm \cite{Petkovsek1996AB}.

Hence, we have proved the following theorem.

\begin{thm}\label{main-thm} For $A_{k,\ell}$, the number of $123$-avoiding up-down words of length $\ell$ over $[k]$, $A_{k,0}=1$, $A_{k,1}=k$, and for $\ell\geq2$,
\begin{align}\label{main-formula-A-k-l}
 A_{k,\ell}=
\begin{cases}
  \frac{1}{i+1} \binom{i+k-2}{i} \binom{i+k-1}{i}, & \mbox{ if } \ell=2i,\\[6pt]
  \frac{i+2k-2}{(i+1)(i+2)}  \binom{i+k-2}{i}  \binom{i+k-1}{i}, & \mbox{ if } \ell=2i+1.
\end{cases}
\end{align}
\end{thm}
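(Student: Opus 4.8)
The plan is to obtain all of the theorem's assertions directly from the Lemma, so that only two elementary binomial summations remain. The initial conditions are immediate: the empty word is the unique $123$-avoiding up-down word of length $0$, whence $A_{k,0}=1$, and each of the $k$ single letters of $[k]$ is vacuously such a word, whence $A_{k,1}=k$. For the even case I would begin from the decomposition $A_{k,2i}=\sum_{j=2}^{k}A_{k,2i}^{j}$ and substitute the closed form \eqref{A^j}. Factoring out the $j$-independent quantity $\frac{1}{k-1}\binom{i+k-2}{i}$ reduces the problem to evaluating $\sum_{j=2}^{k}(j-1)\binom{i+k-j-1}{i-1}$.

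The cleanest route to these sums is to expand the polynomial weight in $j$ in the binomial basis and apply the Vandermonde-type convolution $\sum_{m}\binom{m}{a}\binom{N-m}{b}=\binom{N+1}{a+b+1}$. Writing $m=j-1$ and $N=i+k-2$, and using $(j-1)=\binom{j-1}{1}$, the even-case sum becomes $\binom{i+k-1}{i+1}=\frac{k-1}{i+1}\binom{i+k-1}{i}$; reassembling the factors yields exactly $A_{k,2i}=\frac{1}{i+1}\binom{i+k-2}{i}\binom{i+k-1}{i}$. (Extending the range of $m$ to $0\le m\le N$ costs nothing, since the extra binomials vanish, so the convolution applies verbatim.)

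For the odd case I would first establish the combinatorial recurrence $A_{k,2i+1}=\sum_{j=2}^{k}(j-1)A_{k,2i}^{j}$. The idea is that appending any of the $j-1$ letters smaller than $j$ to a $123$-avoiding up-down word $w$ of length $2i$ ending in $j$ yields a $123$-avoiding up-down word of length $2i+1$, and that every odd-length word arises exactly once by deleting its final letter. The only genuine step is that appending a letter $c<j$ creates no new pattern: a fresh occurrence of $123$ would have to end at $c$, forcing an ascending pair $w_p<w_q<c$ with $q<2i$, but then $w_p<w_q<w_{2i}=j$ would already be a $123$ in $w$, a contradiction. Substituting \eqref{A^j} again reduces the count to $\frac{1}{k-1}\binom{i+k-2}{i}\sum_{j=2}^{k}(j-1)^2\binom{i+k-j-1}{i-1}$.

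The main obstacle is this last, heavier sum. I would handle it by expanding $(j-1)^2=\binom{j-1}{1}+2\binom{j-1}{2}$ and applying the same convolution to each term, obtaining $\binom{i+k-1}{i+1}+2\binom{i+k-1}{i+2}$; simplifying this combination and reinserting the prefactor produces the stated value $\frac{i+2k-2}{(i+1)(i+2)}\binom{i+k-2}{i}\binom{i+k-1}{i}$. Since both sums are hypergeometric in $j$, an equally mechanical alternative, matching the surrounding text, is to certify the two closed forms by Gosper's algorithm; the convolution computation is simply the hand version of that certificate.
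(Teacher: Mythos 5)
Your proposal is correct and follows essentially the same route as the paper: both cases rest on the Lemma's closed form \eqref{A^j}, with the even case obtained by summing over the last letter $j$ and the odd case via the recurrence $A_{k,2i+1}=\sum_{j=2}^{k}(j-1)A^{j}_{k,2i}$ from appending a smaller final letter, exactly as the paper does. The only difference is in the last step and is cosmetic: you evaluate the two sums by hand through the convolution $\sum_{m}\binom{m}{a}\binom{N-m}{b}=\binom{N+1}{a+b+1}$ (and you also spell out the deletion/appending bijection and the preservation of $123$-avoidance, which the paper merely asserts), whereas the paper certifies both closed forms by the Gosper algorithm.
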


\subsection{Generating functions}

%
%

In this subsection, an expression for the generating function  for the numbers   $A_{k,i}$  of  123-avoiding up-down words of length $i$ over $[k]$ is given.
We adopt the notation of {\em Narayana polynomials}, which are defined as $N_0(x)=1$ and,  for $n\geq 1$,
$$N_{n}(x)=\sum_{i=0}^{n-1}\frac{1}{i+1} \binom{n}{i}\binom{n-1}{i}x^i.$$
Due to Brenti \cite{Brenti1989Unimodal} and  Reiner and Welker \cite[Section 5.2]{Reiner2005Charney}, a remarking generating function for $A_{k,2i}$ can be expressed as follows: 
\begin{align}\label{eq:Narayana}
\sum_{i\geq 0}A_{k,2i}x^i=\frac{N_{k-2}(x)}{(1-x)^{2k-3}}.
\end{align}
On the other hand, by Theorem \ref{main-thm}, a routine computation leads to the following identity,
\begin{align}\label{eq:123-odd}
  A_{k,2i-1}&=A_{k,2i}-A_{k-1,2i},
\end{align}
for all $i\geq 2$.
(Note that we shall also give a combinatorial interpretation of \eqref{eq:123-odd} in Section \ref{all-cases}.)
Thus, together with $A_{k,1}=k$, it follows that
\begin{align*}
 \sum_{i\geq 1}A_{k,2i-1}x^i&= x+ \sum_{i\geq 1}A_{k,2i}x^i-\sum_{i\geq 1}A_{k-1,2i}x^i\\
 &=x+\frac{N_{k-2}(x)}{(1-x)^{2k-3}}-\frac{N_{k-3}(x)}
 {(1-x)^{2k-5}}\\
 &=x+\frac{N_{k-2}(x)-(1-x)
 ^2N_{k-3}(x)}{(1-x)^{2k-3}}.
\end{align*}
Hence, we are ready to obtain the main result of this subsection,
\begin{align*}
 \sum_{i\geq 0}A_{k,i}x^i&=\sum_{i\geq0}A_{k,2i}x^{2i}
 + \sum_{i\geq 1}A_{k,2i-1}x^{2i-1}\\
 &=\frac{N_{k-2}(x^2)}{(1-x^2)^{2k-3}}+x+ \frac{N_{k-2}(x^2)-
 (1-x^2)^2N_{k-3}(x^2)}{x(1-x^2)^{2k-3}}\\
 &=x+\frac{(1+x)N_{k-2}(x^2)-(1-x^2)^2N_{k-3}(x^2)}{x(1-x^2)^{2k-3}}.
\end{align*}

\section{A bijection between  \texorpdfstring{$S^{132}_{k,2i}$}{Lg}  and \texorpdfstring{$S^{123}_{k,2i}$}{Lg}}\label{bijection-sec}

Let $p$ be a pattern and $S^{p}_{k,\ell}$ be the set of  $p$-avoiding up-down words of length $\ell$ over $[k]$.
In this section, we will build a bijection between $S^{132}_{k,2i}$  and $S^{123}_{k,2i}$.

The idea here is to introduce the notion of irreducible words and show that irreducible words in $S^{132}_{k,2i}$ can be mapped in a 1-to-1 way into irreducible words in $S^{123}_{k,2i}$, while reducible words in these sets can be mapped to each other as well.

\begin{definition} A word $w$ is {\em reducible}, if $w=w_1w_2$ for some non-empty words $w_1$ and $w_2$, and  each letter in $w_1$ is no less than every letter in $w_2$. The place between $w_1$ and $w_2$ in $w$ is called a {\em cut-place}.\end{definition}

For example, the word 242313 is irreducible, while the word 341312 is reducible (it can be cut into 34 and 1312).

Note that in a reducible up-down word, if we have a cut-place, and there are equal elements on both sides of it, then to the left such elements must be bottom elements, and to the right they must be top elements.

\begin{lem} A word $w$ in $S^{132}_{k,2i}$ is irreducible if and only if  $w=w_1xy$, where $w_1$ is a word in $S^{132}_{k,2i-2}$, $x$ is the minimum letter in $w$ (possibly, there are other copies of $x$ in $w$) and $y$ is the maximum letter in $w$ (possibly, there are other copies of $y$ in $w$).\end{lem}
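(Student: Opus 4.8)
The plan is to prove both implications by analysing the positions of the extreme letters of $w$, using $132$-avoidance to convert positional information into either a cut-place or a forbidden pattern. Throughout I use that in an up-down word of even length the odd positions are valleys and the even positions are peaks, with $w_{2i-1}<w_{2i}$, and that a strict ascent/descent holds at each step.

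For the easier ($\Leftarrow$) direction I would assume $w=w_1xy\in S^{132}_{k,2i}$, with $x$ the minimum letter and $y$ the maximum letter of $w$, and show $w$ is irreducible. Any cut-place after a position $p$ with $1\le p\le 2i-1$ places the last letter $y=\max(w)$ in the right factor, so the cut condition forces every letter of the left factor to be $\ge y$, hence equal to $\max(w)$. Since a nonempty up-down prefix begins with a strict ascent $w_1<w_2$, its letters cannot all equal the maximum, and a one-letter prefix equal to the maximum contradicts $w_2>w_1$. Thus no cut-place exists and $w$ is irreducible.

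For the ($\Rightarrow$) direction I would first locate the maximum. Let $y=\max(w)$; the maximum can only occur at a peak, so its last occurrence is at some even position $2m$. For every $c>2m$ we have $w_c<y$, so avoiding a $132$ with the large letter at $2m$ forces $w_a\ge w_c$ for all $a<2m<c$; together with $w_{2m}=y$ this says every letter in positions $1,\dots,2m$ is $\ge$ every letter in positions $2m+1,\dots,2i$, which is exactly a cut-place after $2m$. Irreducibility then forces $2m=2i$, giving $y=w_{2i}$. Next I locate the minimum: $x=\min(w)$ can only occur at a valley, hence at an odd position, say with last occurrence $2s-1$. If $2s-1<2i-1$, then (being odd) $2s-1\le 2i-3$, so positions $2s-1,2s,2s+1$ all lie in $[1,2i]$ and satisfy $w_{2s-1}=x<w_{2s+1}<w_{2s}$, using $w_{2s}>w_{2s+1}$ from the up-down condition and $w_{2s+1}>\min$ since $2s-1$ is the last occurrence of $x$; this is a $132$ pattern, a contradiction. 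Hence the minimum last occurs at position $2i-1$, so $x=w_{2i-1}$. Writing $w=w_1xy$ with $w_1=w_1\cdots w_{2i-2}$, the factor $w_1$ is a prefix of an up-down word ending at a peak, hence up-down of length $2i-2$, and it is $132$-avoiding as a subword of $w$; therefore $w_1\in S^{132}_{k,2i-2}$, completing the direction.

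I expect the main obstacle to be the maximum-to-cut step in the ($\Rightarrow$) direction: the crux is recognising that, taken at the \emph{last} occurrence of the maximum, $132$-avoidance is precisely equivalent to the assertion that every letter to the left dominates every letter to the right, which is exactly a cut-place. The remaining points are routine but demand care with strict inequalities and with repeated copies of the extreme letters, so that ``last occurrence'' is used consistently and the up-down alternation is invoked correctly at the three positions producing the $132$ pattern.
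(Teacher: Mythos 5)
Your proof is correct, and it is both a variant of and a tightening of the paper's argument, which sketches only the forward direction. For the minimum the two proofs coincide in mechanism: a copy of the minimum strictly before position $2i-1$ produces a $132$ pattern with a peak and the valley after it (the paper uses the triple $\min,\,w_{2i-2},\,w_{2i-1}$; you use $w_{2s-1},w_{2s},w_{2s+1}$ at the last occurrence of the minimum --- same idea, and note this step needs no irreducibility in either version). For the maximum you genuinely diverge. The paper uses irreducibility to assert $y\ge\min(w_1)$ (ruling out a cut after position $2i-2$) and then claims a $132$ from the maximum of $w_1$, its immediate predecessor, and $y$; as written this needs that predecessor to be strictly below $y$, which the stated inequality does not supply, particularly when letters repeat. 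Your mechanism --- at the \emph{last} occurrence of the maximum, $132$-avoidance forces every letter to its left to dominate every letter to its right, which is precisely a cut-place, so irreducibility pushes that occurrence to position $2i$ --- is more robust: it handles equalities correctly (the cut-place definition allows equal letters), and in fact any careful repair of the paper's predecessor argument ends up reproving exactly your cut-place observation. As a side benefit, your observation exhibits the canonical cut after the last maximum in any reducible word, which is the factorization into irreducible blocks that the bijection $\phi$ of this section relies on. Finally, you prove the converse direction explicitly (a cut would force the entire left factor to equal the maximum, contradicting the initial strict ascent $w_1<w_2$, with the one-letter prefix handled separately), whereas the paper ends after the forward direction and leaves both the converse and the trivial claim $w_1\in S^{132}_{k,2i-2}$ implicit.
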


 \begin{proof}
  If $x$ is not the minimum element in $w$, then the element right before it, and the minimum element in $w$ will form the pattern 132.
  Since $w$ is irreducible, $y$ is forced to be no less than the minimum element in $w_1$.
  On the other hand, if $y$ is not the maximum element in $w$, then  the maximum one  in $w_1$ and the element just preceding it will form the pattern 132.
  This completes the proof.
 \end{proof}

Now, given a word $w$ in $S^{132}_{k,2i}$, we can count  in how many ways it can be extended to an irreducible word in $S^{132}_{k,2i+2}$. Suppose that $a$ and $b$ are the minimum and the maximum elements in $w$, respectively. Then the number of extensions of $w$ in $S^{132}_{k,2i+2}$ is $a\cdot(k-b+1)$, since there are $a$ choices of the next to last element and $k-b+1$ choices of the last element.

Next, we discuss a procedure of turning any word $w$ in $S^{123}_{k,2i}$ into an irreducible word in $S^{123}_{k,2i+2}$. From this procedure, it would be clear that the number of choices  is $a\cdot(k-b+1)$, where $a$ and $b$ are the minimum and the maximum elements in $w$, respectively.

Suppose that $w=b_1 t_1 b_2 t_2 \cdots b_i t_i$, where $b_j$'s and $t_j$'s stand for bottom and top elements, respectively.
To obtain the desired word, we inserting a new top element $x$, shift the bottom elements one position to the left, and then insert one more bottom element $y$.
Then, the extension is of the form
$$w'=b_1 x b_2 t_1 b_3 t_2 \cdots b_i t_{i-1} y t_i,$$
where $x\ge b$ and $y\le a$, and again, $a$ and $b$ are the minimum and the maximum elements in $w$, respectively.
For example, if  $w = 242313\in S^{123}_{5,6}$, $a=1$ and $b=4$,  then $w'$ can be $24241313$ or $25241313$.

To see that  the  resulting word $w'$ is an up-down word. In fact,  it is sufficient to show that $b_{j+1} < t_{j}$ for $1\le j\le i-1$ and $t_j > b_{j+2}$ for $1\le j\le i-2$. The first inequality follows from the fact that $w$ is an up-down word, while the second one is true, because otherwise $t_j\le b_{j+2}<t_{j+1}$ and thus $b_j t_j t_{j+1}$  would form a $123$ pattern. 
We also claim that $w'$ belongs to  $S^{123}_{k,2i+2}$. 
An equivalent condition an up-down  $w$ is $123$ avoiding is that $w$ satisfy  
$b_1\geq b_2\geq\dots \geq b_i$  and  $t_1\geq t_2\geq\dots \geq t_i$. 
From the construction of $w'$, we obtain that  $w'$ is also $123$ avoiding.
Besides,  $w'$ is irreducible, since $b_j < t_j$ for $1\le j\le i$.

Now, a bijection between $S^{132}_{k,2i}$  and $S^{123}_{k,2i}$ is straightforward to set recursively, with a trivial base case of words of length 2 mapped to themselves. Indeed, if we assume that we can map all words in $S^{132}_{k,2i}$ to all words in $S^{123}_{k,2i}$, then applying the same choice of $x$ and $y$, we can map all irreducible words in $S^{132}_{k,2i+2}$ to all irreducible words in $S^{123}_{k,2i+2}$. Finally, each reducible word $w$ is of the form $w_1w_2$, where $w_1$ is irreducible word with maximum possible even length. But then $w_1$ and $w_2$ are of smaller lengths than $w$, and we can map them recursively.

\begin{table}[!htbp]
 \renewcommand{\arraystretch}{1.3}
\begin{center}
\begin{tabular}{|ccc|lll|}
\hline
$S^{132}_{4,4}$ & & $S^{123}_{4,4}$ &$S^{132}_{4,4}$ & & $S^{123}_{4,4}$ \\
\hline
1212&& 1212 &2312&& 2312\\
2412 && 2412&3412 && 3412\\
1213&& 1312&1313 &&1313\\
2313&& 2313&3413& &3413\\
2323&& 2323&3423&& 3423\\
1214&& 1412&1314 &&1413\\
2314& &2413&1414 &&1414\\
2414&& 2414&3414& &3414\\
2324 &&2423&2424& &2424\\
3424&& 3424&3434&& 3434\\
\hline
\end{tabular}
\end{center}
\caption{The bijection $\phi : S^{132}_{4,4} \rightarrow S^{123}_{4,4}$.}\label{tab2}
\end{table}

For example, $w=3435121213\in S^{132}_{5,10}$ is reducible, since it can be cut into $3435$ and $121213$. We calculate that $\phi(3435)$ by two steps: first $\phi(34)=34$ and the second $\phi(3435)=3534$. Similarly, $\phi(121213)=131212$. It follows that $\phi(3435121213)=3534131212$, i.e. the word $3435121213$ in $S^{132}_{5,10}$ is mapped to $3534131212$ in $S^{123}_{5,10}$. Also, see Table \ref{tab2}  showing images of all words in $S^{132}_{4,4}$. 

\section{Enumeration of 132-avoiding up-down words}\label{sec-132-av}
In this section, we consider the enumeration of 132-avoiding up-down words.
Denote $B_{k,\ell}$ the number of 132-avoiding up-down words of length $\ell$ over the alphabet $[k]$, and $B_{k, \ell}^{j}$ the number of those words counted by $B_{k,\ell}$ whose letter in the second last position is $j$.

From the bijection, it follows that
\begin{align}
 B_{k,2i} = A_{k,2i}=\frac{1}{i+1} \binom{i+k-2}{i}   \binom{i+k-1}{i}.
\end{align}

For any legal word of length $2i\ (i\ge 1)$ whose letter in the second last position is $j\ (1\le j\le k-1)$,  the minimum letter in this word is also $j$, since it is 132-avoiding. By subtracting $j-1$ from each letter of this word, we obtain a legal word over $[k-j+1]$  whose letter in the second last position is $1$.  Thus, we have that
$$B_{k,2i}^{j} = B_{k-j+1,2i}^{1}.$$
Similarly, we obtain that
\begin{align*}
 B_{k,2i}^{1} = B_{k,2i}-\sum_{j=2}^{k-1}B_{k,2i}^{j}=B_{k,2i}-B_{k-1,2i}= \frac{i+2 k-3 }{(i+1)(k-1)} \binom{i+k-3}{i-1} \binom{i+k-2}{i}.
\end{align*}

Now, we consider legal words of odd length. For any legal word of length $2i\ (i\ge 1)$ whose letter in the second last position is $j\ (1\le j\le k-1)$,  the minimum letter in the word is also $j$ and hence, we can adjoin any letter in $[j]$ at the end to form an up-down word on  $[k]$ of length $2i+1$. In fact, such words are necessarily 132-avoiding.
So, we obtain that for $i\ge 1$
\begin{align*}
B_{k,2i+1}= & \sum_{j=1}^{k-1} j B_{k,2i}^{j}\\
=& \sum_{j=1}^{k-1} j B_{k-j+1,2i}^{1}\\
=& \sum _{j=1}^{k-1}j\, \frac{i+2 (k-j)-1}{(i+1)(k-j)}  \binom{i+k-j-2}{i-1} \binom{i+k-j-1}{i}\\
=& \sum _{j=1}^{k-1} \frac{(k-j)  (i+2 j-1)}{ j(i+1)} \binom{i+j-2}{i-1} \binom{i+j-1}{i}.
\end{align*}

Unfortunately, we were not able to find a closed form formula for $B_{k,2i+1}$. We conclude this section with listing expressions for $B_{k,2i+1}$ for $k=3,4,5,6$ and $i\ge 1$:
\begin{align*}
B_{3,2i+1} & = \frac{1}{2} (i^2+3 i+4), \\
B_{4,2i+1} & = \frac{1}{12} (i^4+8 i^3+29 i^2+46 i+36), \\
B_{5,2i+1} & = \frac{1}{144} (i^6+15 i^5+103 i^4+381 i^3+832 i^2+972 i+576), \\
B_{6,2i+1} & = \frac{1}{2880} (i^8+24 i^7+266 i^6+1704 i^5+6929 i^4+18096 i^3+30244 i^2+29136 i+14400).
\end{align*}

\section{Enumeration of 312-avoiding up-down words}\label{312-up-down-sec}
In this section, we consider the enumeration of 312-avoiding up-down words. 
Denote $C_{k,\ell}$ the number of 312-avoiding up-down words of length $\ell$ over the alphabet $[k]$, and $C_{k, \ell}^j$ the number of those words counted by $C_{k,\ell}$ that end with $j$.

Recall the definition of the complement $w^c$ of a word $w$ given in Section~\ref{intro}. Also, for a word $w=w_1w_2\cdots w_n$, its {\em reverse word} $w^{r}$ is given by $w^{r}=w_{\ell}w_{\ell-1}\cdots w_{1}$. 
It is clear that the operations of reverse and complement are both bijections on alternating words.

Let $p$ be a pattern and let $\widehat{S}^p_{k,\ell}$ denote the set of all $p$-avoiding down-up words of length $\ell$ over an alphabet $[k]$.

We first consider the words of odd length.

\begin{prop}For all $k\geq 1$ and $i\geq 0$,
we have that the number of $312$-avoiding up-down words of length $2i+1$ on $[k]$  is the same as that of $123$-avoiding up-down words on $[k]$ of the same length. Namely,
$$C_{k,2i+1}=A_{k,2i+1}.$$
\end{prop}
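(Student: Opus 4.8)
The plan is to mirror, for the pattern $312$, the ``append the last letter'' reduction that was used for $123$ in Section~\ref{123-up-down-sec}. Every up-down word of odd length $2i+1$ ends in a bottom position, hence is obtained in a unique way from an up-down word $w=w_1\cdots w_{2i}$ of even length by adjoining a final letter $x$ with $1\le x\le w_{2i}-1$. For $123$-avoidance \emph{every} such $x$ is admissible, which produced $A_{k,2i+1}=\sum_{j=2}^{k}(j-1)A^{j}_{k,2i}$. So I would first record the analogous bookkeeping for $C$: letting $g(w)$ denote the number of admissible final letters for a word $w\in S^{312}_{k,2i}$, we have $C_{k,2i+1}=\sum_{w\in S^{312}_{k,2i}}g(w)$. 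It is worth noting at the outset that the symmetries available here (reverse sends odd-length up-down words to odd-length up-down words and turns $312$ into $213$, while complement turns $312$-avoiding up-down words into $132$-avoiding down-up words) only reformulate the problem: since $312$ and $123$ lie in different classes under reverse and complement, no single such operation can prove the statement, so the coincidence is genuinely special to the odd-length alternating setting.

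The core of the work is to understand $g(w)$. Appending $x$ creates a $312$ pattern precisely when there are positions $a<b\le 2i$ with $w_b<x<w_a$; hence $x$ is admissible if and only if in $w$ every letter smaller than $x$ occurs to the left of every letter larger than $x$. I expect the \textbf{main obstacle} to be exactly here: unlike the $123$ case, this admissible set is \emph{not} determined by the last letter $w_{2i}$ alone, so the decomposition does not immediately collapse to a one-parameter recurrence. The key lemma I would prove is that for a $312$-avoiding $w$ the admissible set is an initial segment $\{1,\dots,c(w)\}$, giving $g(w)=\min\bigl(c(w),\,w_{2i}-1\bigr)$; this is the step where $312$-avoidance is actually used.

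To finish I would take the structural route, which bypasses a direct evaluation of $\sum_w g(w)$. I would establish the mirror of \eqref{eq:123-odd}, namely $C_{k,2i+1}=C_{k,2i+2}-C_{k-1,2i+2}$, by the same delete-the-maximal-letter bijection that underlies the combinatorial interpretation of \eqref{eq:123-odd} announced in Section~\ref{all-cases} (so that $C_{k,2i+2}-C_{k-1,2i+2}$ counts the $312$-avoiding up-down words of length $2i+2$ over $[k]$ that use the letter $k$, which I would match bijectively with the odd-length words). Combining this with the even-length equality $C_{k,2i}=A_{k,2i}$, which I would establish separately via a recurrence for $C^{j}_{k,2i}$ in the spirit of the Lemma of Section~\ref{123-up-down-sec}, and then substituting \eqref{eq:123-odd}, yields $C_{k,2i+1}=A_{k,2i+2}-A_{k-1,2i+2}=A_{k,2i+1}$, with the base case $i=0$ immediate from $C_{k,1}=k=A_{k,1}$. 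As an alternative (enumerative) ending, one could instead refine the statistic to record $c(w)$, solve the resulting recurrence in closed form, and verify $\sum_w g(w)=A_{k,2i+1}$ directly against Theorem~\ref{main-thm} using a Gosper-type summation, exactly as was done for the even-length formula for $A_{k,2i}$.
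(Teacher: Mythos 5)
Your ``key lemma'' in the second paragraph is false. Take $w=1314\in S^{312}_{k,4}$ for any $k\ge 4$: appending $x=3$ gives $13143$, which is an up-down $312$-avoiding word, but appending $x=2$ gives $13142$, which contains the occurrence $3,1,2$ (positions $2,3,5$) of $312$. So the admissible set here is $\{1,3\}$, not an initial segment, and $g(w)=\min\bigl(c(w),w_{2i}-1\bigr)$ is meaningless. Your characterization ``$x$ is admissible iff every letter smaller than $x$ lies to the left of every letter larger than $x$'' is correct, but monotonicity in $x$ fails because the witness $w_a$ against a smaller $x'$ may satisfy $x'<w_a\le x$. This kills your ``alternative (enumerative) ending''; the structural route does bypass it, as you say, so the lemma is dead weight rather than fatal.

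In the structural route, the identity $C_{k,2i+1}=C_{k,2i+2}-C_{k-1,2i+2}$ for $i\ge 1$ is correct, and your bijection is the right one (in an even-length $312$-avoiding up-down word the last letter equals the maximum, so a word using $k$ ends in $k$, and deleting/appending that final $k$ is a bijection; you are also right to treat $i=0$ separately, since appending $k$ to the one-letter word $k$ fails). The genuine gap is the other pillar: an \emph{independent} proof of $C_{k,2i}=A_{k,2i}$, which you dispatch in one clause. As phrased it is circular: since the last letter of an even-length word is its maximum, the natural recurrence is $C^{j}_{k,2i}=C^{j}_{j,2i}=C_{j,2i-1}$, which feeds straight back into odd lengths; indeed, given your identity, the even-length equality (for $k$ and $k-1$) and the odd-length proposition are essentially equivalent statements (summing $C_{k,2i}=\sum_{j=2}^{k}C_{j,2i-1}$ telescopes against your identity and yields no new information). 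The paper runs the logic in the opposite direction: it proves the odd case \emph{first} and only then derives $C_{k,2i}=A_{k,2i}$ in Section~\ref{312-up-down-sec} by summation and Gosper, so you cannot borrow that even-length result. The paper's actual proof of the proposition is a few lines: complement sends $S^{312}_{k,2i+1}$ to $132$-avoiding down-up words, reverse plus complement sends $S^{123}_{k,2i+1}$ to $123$-avoiding down-up words, and for a down-up word $w=w_1w'$ of odd length one sets $\psi(w)=w_1\phi(w')$, where $\phi$ is the Section~\ref{bijection-sec} bijection, using crucially that $\phi$ preserves the first letter. So your opening remark that the symmetries ``only reformulate the problem'' is true of the symmetries alone but stops one step short: combined with the already established even-length bijection $\phi$, the reformulation finishes the proof. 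To rescue your route you would need an honestly self-contained recurrence for $C^{j}_{k,2i}$ (say, by conditioning on occurrences of the letter $1$ or $k$ and analysing the forced suffix structure), none of which is supplied.
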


\begin{proof}
 We shall prove this theorem by  establishing  a bijection between $S^{312}_{k,2i+1}$ and $S^{123}_{k,2i+1}$.
Applying the complement operation to the former of these sets, and reverse and complement to the latter set, it suffices to show that there exists a bijection  between $\widehat{S}^{132}_{k,2i+1}$ and $\widehat{S}^{123}_{k,2i+1}$. 

The map $\psi(w): \widehat{S}^{132}_{k,2i+1} \rightarrow \widehat{S}^{123}_{k,2i+1}$ is defined as follows.
For any $w=w_1w_2\cdots w_{2i+1} \in \widehat{S}^{132}_{k,2i+1}$, let $w'$ be $w_2\cdots w_{2i+1}$.  It is clear that $w'\in S^{132}_{k,2i}$. Thus let $$\psi(w)=w_1\phi(w'),$$ where the  map $\phi: S^{132}_{k,2i} \rightarrow S^{123}_{k,2i}$ is described in Section \ref{bijection-sec}. 

We need to show that $\psi$ is well-defined.
From the construction of $\phi$, we see that $\phi$ preserves the first letter, i.e.  $\phi(w')_1=w'_1=w_2$. Therefore,  it follows that $\psi(w)\in \widehat{S}^{123}_{k,2i+1}$. Hence, the map $\psi$ is well-defined. 

It is not difficult to see by construction that $\psi$ is a bijection.
Hence, we get  a bijection between $\widehat{S}^{132}_{k,2i+1}$ and $\widehat{S}^{123}_{k,2i+1}$.
This completes the proof.
\end{proof}

Now let us consider the words of even length. Note that
$$C_{k,2i}=\sum_{j=2}^{k} C_{k,2i}^j.$$ 

For any word $w \in S^{312}_{k,2i}$ whose last letter is $j$, the maximum letter of $w$ is also $j$, since the word is $312$-avoiding and having $j'>j$ in $w$ would lead to an occurrence of three letters $j'w_{2i-1}j$ forming the pattern $312$. Thus, we have that $$C_{k,2i}^j=C_{j,2i}^j,$$ where $2\leq j\leq k$ .

Moreover, for any word in $S^{312}_{j,2i}$ ending with $j$, we can remove $j$ to form a word of length $2i-1$, which is also $312$-avoiding. On the other hand, for any word  $S^{312}_{j,2i-1}$, we can adjoin a letter $j$ at the end to form a $312$-avoiding word of length $2i$. Thus, $$C_{j,2i}^j=C_{j,2i-1}.$$
So,  we obtain that
\begin{align*}
 C_{k,2i} &= \sum_{j=2}^{k}C_{j,2i-1}\\
 &=\sum_{j=2}^{k}\frac{i-3+2j}{i(i+1)}\binom{i+j-3}{ i-1}\binom{i+j-2}{i-1}\\
 &=\frac{1}{i+1} \binom{i+k-2}{i}\binom{i+k-1}{i}\\
 &=A_{k,2i},
\end{align*}
where  the  second last equality can be deduced from the Gosper algorithm \cite{Petkovsek1996AB}.

We have just obtained the main result in this section.
\begin{thm}\label{123-312}
The sets of $312$-avoiding up-down words and $123$-avoiding up-down words are equinumerous, that is, $$C_{k,\ell}=A_{k,\ell}$$ for all $k\geq 1$ and $\ell\geq 0.$
\end{thm}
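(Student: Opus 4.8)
The plan is to prove the two parities separately and then glue them together with the trivial base case $\ell=0$; since everything required has already been established in this section, the argument is really an assembly of the preceding pieces rather than new work.

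First I would dispose of the odd lengths. For $\ell=2i+1$ with $i\ge 0$ the preceding Proposition already gives $C_{k,2i+1}=A_{k,2i+1}$, via the bijection $\psi\colon\widehat{S}^{132}_{k,2i+1}\to\widehat{S}^{123}_{k,2i+1}$ obtained by peeling off the leading letter and applying the length-$2i$ bijection $\phi$ of Section~\ref{bijection-sec}. In particular this also settles the base case $\ell=1$, where $C_{k,1}=A_{k,1}=k$.

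Next I would treat the even lengths $\ell=2i$ with $i\ge 1$. The key structural observation is that in a $312$-avoiding up-down word ending in $j$ the letter $j$ must be the maximum: a larger letter $j'>j$ occurring earlier would, together with the penultimate letter and the final $j$, produce an occurrence of $312$. This gives $C^j_{k,2i}=C^j_{j,2i}$, and deleting (respectively adjoining) the final maximal $j$ yields $C^j_{j,2i}=C_{j,2i-1}$. Summing over $2\le j\le k$ and invoking the odd case together with the closed form of Theorem~\ref{main-thm} reduces the identity $C_{k,2i}=A_{k,2i}$ to the single hypergeometric summation $\sum_{j=2}^{k}\frac{i-3+2j}{i(i+1)}\binom{i+j-3}{i-1}\binom{i+j-2}{i-1}=\frac{1}{i+1}\binom{i+k-2}{i}\binom{i+k-1}{i}$, which is exactly the type the Gosper algorithm settles. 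The remaining case $\ell=0$ is immediate, since both sides count the empty word and equal $1$; combining the odd case, the even case, and this base case covers all $k\ge 1$ and $\ell\ge 0$.

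I would expect the only genuine obstacle to sit in the even-length reduction. The structural step that pins the maximum to the final position is short, but collapsing the sum over $j$ into a single Narayana-type product is not a naive manipulation and needs the Gosper (creative-telescoping) certificate; this is the one place where a routine-looking calculation actually does the heavy lifting. By contrast the odd case carries no independent difficulty, as its entire weight is already borne by the bijection $\psi$ built on $\phi$.
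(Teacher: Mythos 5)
Your proposal is correct and follows essentially the same route as the paper: the odd case via the bijection $\psi$ built on $\phi$, and the even case via pinning the maximum letter to the last position, yielding $C^j_{k,2i}=C^j_{j,2i}=C_{j,2i-1}$ and the Gosper-verifiable summation $\sum_{j=2}^{k}C_{j,2i-1}=A_{k,2i}$. Your explicit handling of the base cases $\ell=0,1$ is a minor tidying the paper leaves implicit, but the substance is identical.
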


\section{Enumeration of other pattern-avoiding up-down words}\label{all-cases}

In this section, we consider the enumeration of other pattern-avoiding up-down words.
In order to avoid confusion, let $N_{k,\ell}(p)$ denote the number of $p$-avoiding up-down words of length $\ell$ over the alphabet $[k]$.

We first focus on all six length 3 permutation patterns to be avoided on up-down words of odd length.
\begin{thm} For all $k\geq 1$ and $i\geq 0$, we have
$$N_{k,2i+1}(123)=N_{k,2i+1}(312)=N_{k,2i+1}(213)
=N_{k,2i+1}(321)$$
and
$$N_{k,2i+1}(132)=N_{k,2i+1}(231).$$
\end{thm}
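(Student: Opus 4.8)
The plan is to exploit the \emph{reverse} operation, which behaves especially well on up-down words of \emph{odd} length. Recall that for a word $w=w_1w_2\cdots w_\ell$ the reverse is $w^r=w_\ell w_{\ell-1}\cdots w_1$. The first thing I would check is the parity observation that makes the whole statement work: if $\ell$ is odd, then $w^r$ is an up-down word whenever $w$ is. Indeed, writing out $w_1<w_2>w_3<\cdots<w_{\ell-1}>w_\ell$ for odd $\ell$ and reading it backwards gives $w_\ell<w_{\ell-1}>w_{\ell-2}<\cdots$, which is again up-down. (For even length this fails, turning an up-down word into a down-up one, which is precisely why the theorem is restricted to odd length.) Since reverse clearly preserves the alphabet $[k]$ and is an involution, it is a bijection on the set of up-down words of length $2i+1$ over $[k]$.

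Next I would record the standard pattern-containment fact: a word $w$ contains a pattern $p$ if and only if $w^r$ contains the reversed pattern $p^r$, because an occurrence of $p$ in $w$ becomes, read backwards, an occurrence of $p^r$ in $w^r$. Consequently reverse restricts to a bijection between the up-down words of length $2i+1$ over $[k]$ avoiding $p$ and those avoiding $p^r$, which yields the key identity
$$N_{k,2i+1}(p)=N_{k,2i+1}(p^r).$$
Applying this with the three relevant pattern pairs, namely $123^r=321$, $213^r=312$, and $132^r=231$, immediately gives $N_{k,2i+1}(123)=N_{k,2i+1}(321)$, $N_{k,2i+1}(213)=N_{k,2i+1}(312)$, and $N_{k,2i+1}(132)=N_{k,2i+1}(231)$.

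To finish the first chain of equalities I would invoke Theorem~\ref{123-312}, which already establishes $C_{k,\ell}=A_{k,\ell}$, i.e.\ $N_{k,2i+1}(312)=N_{k,2i+1}(123)$, for all lengths. Combining this single bridge with the two reverse-identities above links the two reverse-classes $\{123,321\}$ and $\{213,312\}$ into one block of four equal quantities, completing the chain $N_{k,2i+1}(123)=N_{k,2i+1}(312)=N_{k,2i+1}(213)=N_{k,2i+1}(321)$; the pair $N_{k,2i+1}(132)=N_{k,2i+1}(231)$ stands on its own from the reverse identity. There is essentially no computational obstacle here: the only genuine content is the parity observation that reverse preserves up-down-ness exactly in odd length, together with the already-proved Theorem~\ref{123-312} that is needed to merge the two otherwise-separate reverse-classes. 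I would present the parity check carefully, since it is the hinge of the argument and the reason the even-length case is genuinely different.
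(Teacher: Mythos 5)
Your proposal is correct and follows the paper's proof exactly: the paper likewise applies the reverse operation to get $N_{k,2i+1}(132)=N_{k,2i+1}(231)$, $N_{k,2i+1}(123)=N_{k,2i+1}(321)$, and $N_{k,2i+1}(312)=N_{k,2i+1}(213)$, then invokes Theorem~\ref{123-312} to merge the two chains. Your only addition is spelling out the parity check that reverse preserves up-down-ness in odd length, which the paper leaves implicit.
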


\begin{proof}
 Through the reverse operation, the following equations hold:
$$N_{k,2i+1}(132)=N_{k,2i+1}(231),$$
$$N_{k,2i+1}(123)=N_{k,2i+1}(321),$$ and
$$N_{k,2i+1}(312)=N_{k,2i+1}(213).$$
Combing  with Theorem \ref{123-312}, the proof is complete.
\end{proof}

Next,  we obtain the following result for the case of the even length.
\begin{thm}
For all $k\geq 1$ and $i\geq 1$, there is 
$$N_{k,2i}(123)=N_{k,2i}(132)=N_{k,2i}(312)=N_{k,2i}(213)
=N_{k,2i}(231).$$
\end{thm}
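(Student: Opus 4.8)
The plan is to establish the chain of equalities by combining the results already proved in the excerpt with the elementary symmetry operations of reverse and complement acting on up-down words. First I would recall what is already in hand: Theorem~\ref{main-thm} together with the bijection of Section~\ref{bijection-sec} gives $N_{k,2i}(123)=N_{k,2i}(132)$, since $S^{132}_{k,2i}$ and $S^{123}_{k,2i}$ are equinumerous via $\phi$. Theorem~\ref{123-312} gives $N_{k,2i}(312)=N_{k,2i}(123)$ for all even lengths. So two of the four needed equalities are immediate, and the task reduces to pulling in the pattern $213$ and $231$.

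The key observation is how the symmetry operations interact with the up-down property and with pattern length $3$. The reverse $w^r$ of an up-down word of even length $2i$ is a down-up word (since reversing $w_1<w_2>w_3<\cdots>w_{2i-1}<w_{2i}$ yields a word beginning with a descent), and the complement $w^c$ of an up-down word is a down-up word as well. Composing the two, $w\mapsto (w^r)^c$, sends up-down words of even length back to up-down words of even length, because two sign reversals restore the up-down shape. Under reverse a pattern $\tau$ becomes its reverse pattern, under complement it becomes its complement pattern, so under the composite $\tau$ becomes its reverse-complement. For the length-$3$ patterns in question the relevant identities are $\mathrm{rc}(231)=231$, $\mathrm{rc}(132)=132$, $\mathrm{rc}(213)=213$, and $\mathrm{rc}(312)=312$ (each of these patterns is its own reverse-complement), while $\mathrm{rc}(123)=321$. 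I would therefore single out the one nontrivial bridge I still need, namely linking $213$ or $231$ to the cluster I already control.

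The cleanest route is to connect $213$ to $312$ using the reverse alone. Since reverse sends an even-length up-down word to a down-up word and sends the pattern $312$ to $213$, I have $N_{k,2i}(312)=\widehat{N}_{k,2i}(213)$, where the hat denotes the count over down-up words. To turn this back into an up-down count I apply complement, which sends down-up words to up-down words and sends $213$ to its complement $231$; hence $\widehat{N}_{k,2i}(213)=N_{k,2i}(231)$. This already yields $N_{k,2i}(312)=N_{k,2i}(231)$. A parallel argument, composing reverse and complement so that the shape is preserved and tracking $\mathrm{rc}(213)=213$ against the $312$-cluster, gives $N_{k,2i}(312)=N_{k,2i}(213)$. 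Chaining these with the two equalities from the first paragraph closes the full list
\[
N_{k,2i}(123)=N_{k,2i}(132)=N_{k,2i}(312)=N_{k,2i}(213)=N_{k,2i}(231).
\]

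The main obstacle is bookkeeping rather than depth: one must be scrupulous about which symmetry preserves the up-down shape and which swaps it to down-up, because reverse and complement each individually flip the shape for even length while their composition preserves it, and simultaneously track how each operation transforms the avoided pattern. The safe strategy is to state once and for all the shape-change rule (reverse of an even up-down word is down-up; complement of an up-down word is down-up; so reverse and complement are bijections between up-down and down-up words of even length, and their composite is a bijection from up-down to up-down), then read off each equality by composing the appropriate operations so that both endpoints are up-down words. No new enumeration is required; every equality is a relabeling of the bijections already supplied by the bijection of Section~\ref{bijection-sec} and Theorem~\ref{123-312} through these involutions.
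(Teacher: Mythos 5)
Your overall architecture matches the paper's proof exactly --- combine the Section~\ref{bijection-sec} bijection (giving $N_{k,2i}(132)=N_{k,2i}(123)$), Theorem~\ref{123-312} (giving $N_{k,2i}(312)=N_{k,2i}(123)$), and reverse--complement symmetries to bring in $213$ and $231$ --- but your pattern bookkeeping contains a genuine error that breaks one link as written. The reverse--complement identities you state are all false, and in fact exactly backwards: among length-$3$ patterns, $\mathrm{rc}$ fixes $123$ and $321$ (the reverse of $123$ is $321$, whose complement is $123$ again), while it swaps $132\leftrightarrow 213$ and $231\leftrightarrow 312$. Your first concrete chain is nevertheless correct --- reverse sends $312$ to $213$ and up-down to down-up, complement sends $213$ to $231$ and down-up back to up-down, so $N_{k,2i}(312)=N_{k,2i}(231)$ --- but note that this computation itself instantiates $\mathrm{rc}(312)=231$, contradicting your claimed $\mathrm{rc}(312)=312$. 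The real failure is the ``parallel argument'' for $N_{k,2i}(312)=N_{k,2i}(213)$: it rests on $\mathrm{rc}(213)=213$, which is false, and no single composition of reverse and complement carries $312$-avoiding up-down words to $213$-avoiding up-down words, so that equality cannot be read off from a symmetry alone.

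The gap is easily repaired inside your own framework, and the repair is precisely the paper's proof. The correct identity $\mathrm{rc}(213)=132$ gives $N_{k,2i}(213)=N_{k,2i}(132)$ directly (reverse sends $213$ to $312$ and flips the shape; complement sends $312$ to $132$ and flips it back), after which $213$ joins the cluster through the already-established $N_{k,2i}(132)=N_{k,2i}(123)=N_{k,2i}(312)$. This is exactly what the paper does: it derives the two symmetry equalities $N_{k,2i}(132)=N_{k,2i}(213)$ and $N_{k,2i}(231)=N_{k,2i}(312)$ --- the two genuine $\mathrm{rc}$-orbits --- and closes the chain using the Section~\ref{bijection-sec} bijection and Theorem~\ref{123-312}, the same two external inputs you invoke. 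So your proposal is structurally the paper's argument; with the $\mathrm{rc}$-table corrected and the $213$ bridge rerouted through $132$ rather than directly to $312$, it is complete.
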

\begin{proof}
 Through the complement and reverse operations, it follows that 
$$N_{k,2i}(132)=N_{k,2i}(213),$$ and
$$N_{k,2i}(231)=N_{k,2i}(312).$$
From Section \ref{bijection-sec}, we have that
$$N_{k,2i}(132)=N_{k,2i}(123).$$
Together with Theorem \ref{123-312}, we complete the proof.
\end{proof}

In the rest of this section, we deal with the only remaining case,  321-avoiding up-down words of even length.  Our approach is based on deriving the desired from an alternative enumeration of  123-avoiding up-down words.

All 123-avoiding up-down  words of length $\ell$ over $[k]$, for $\ell \geq 4$, can be divided into the following two cases:
\begin{itemize}
\item Legal words containing no $k$ in them. These words are counted by $A_{k-1, \ell}$.
\item Legal words that contain at least one $k$. Such words $w=w_1w_2\cdots w_{\ell}$ are necessarily of the form $w_1kw_3\cdots w_{\ell}$, since otherwise  $w_1w_2k$ would be an occurrence of the pattern $123$. 
Clearly,  $w_1\geq w_3$ (otherwise, $w_1w_3w_4$ would form the pattern 123).
We let $w'$ be $k w_3\cdots w_{\ell}$ if $w_1=w_3$ and  $w_1 w_3\cdots w_{\ell}$ if $w_1>w_3$.
Clearly, this is a reversible procedure and the obtained words $w'$ are 123-avoiding down-up words.  By applying the complement  operation, we obtain 321-avoiding up-down words over $[k]$ of length $\ell-1$.
\end{itemize}
It follows that for $\ell \geq 4$,  $$A_{k, \ell}=A_{k-1, \ell}+N_{k, \ell-1}(321),$$
and thus 
$$N_{k, \ell-1}(321)= A_{k, \ell}-A_{k-1, \ell}.$$
Hence, by Theorem \ref{main-thm},  we are ready to obtain an expression for  $N_{k, \ell}(321)$.

\begin{thm}\label{thm-321} For the number of $321$-avoiding up-down words of length $\ell$ over $[k]$, $N_{k,0}(321)=1$, $N_{k,1}(321)=k$,  $N_{k,2}(321)=\binom{k}{2}$,  and for $\ell\geq 3$,
\begin{align*}
 N_{k, \ell}(321)=
\begin{cases}
\frac{i (i+2 k-3) (i+2 k-2)+2 (k-2) (k-1)}{(i+1) (i+2) (k-2) (k-1)}  \binom{i+k-2}{i} \binom{i+k-3}{i} , & \mbox{ if } \ell=2i,\\[6pt]
  \frac{i+2k-2}{(i+1)(i+2)}  \binom{i+k-2}{i}  \binom{i+k-1}{i}, & \mbox{ if } \ell=2i+1.
\end{cases}
\end{align*}
\end{thm}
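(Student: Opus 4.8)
The plan is to build directly on the relation established immediately above the statement, namely $N_{k,\ell-1}(321)=A_{k,\ell}-A_{k-1,\ell}$ for $\ell\ge 4$, which I rewrite (setting $m=\ell-1$) as
$$N_{k,m}(321)=A_{k,m+1}-A_{k-1,m+1}\qquad(m\ge 3).$$
Everything then reduces to substituting the two branches of Theorem~\ref{main-thm} into the right-hand side and simplifying, treating odd and even lengths separately. The three smallest lengths lie outside the range $m\ge 3$ and are checked by hand: for $\ell=0$ the only word is the empty word, giving $1$; for $\ell=1$ each of the $k$ single letters is vacuously $321$-avoiding; and for $\ell=2$ an up-down word is just a strictly increasing pair $w_1<w_2$, of which there are $\binom{k}{2}$, all avoiding $321$. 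This accounts for the stated initial conditions $N_{k,0}(321)=1$, $N_{k,1}(321)=k$, and $N_{k,2}(321)=\binom{k}{2}$.

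For odd length $\ell=2i+1$ with $i\ge 1$ I would not even need the displayed relation: reversing a word sends the pattern $321$ to $123$ and is a bijection on up-down words of odd length, so $N_{k,2i+1}(321)=N_{k,2i+1}(123)=A_{k,2i+1}$ (equivalently, combine the displayed relation at $m=2i+1$ with the identity \eqref{eq:123-odd} after the shift $i\mapsto i+1$). The value $A_{k,2i+1}$ is exactly the odd branch of the claimed formula, so this case is immediate from Theorem~\ref{main-thm}.

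The even case $\ell=2i$ with $i\ge 2$ is where the work lies. Here $m+1=2i+1$ is odd, so the displayed relation together with the odd branch of Theorem~\ref{main-thm} gives
$$N_{k,2i}(321)=\frac{i+2k-2}{(i+1)(i+2)}\binom{i+k-2}{i}\binom{i+k-1}{i}-\frac{i+2k-4}{(i+1)(i+2)}\binom{i+k-3}{i}\binom{i+k-2}{i}.$$
Both terms share the factor $\binom{i+k-2}{i}/\big((i+1)(i+2)\big)$, and using the elementary ratio $\binom{i+k-1}{i}=\frac{(i+k-1)(i+k-2)}{(k-1)(k-2)}\binom{i+k-3}{i}$ I can also extract $\binom{i+k-3}{i}$. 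What remains is the single scalar identity
$$(i+2k-2)(i+k-1)(i+k-2)-(i+2k-4)(k-1)(k-2)=i(i+2k-3)(i+2k-2)+2(k-2)(k-1),$$
after which dividing by $(k-1)(k-2)$ yields precisely the even branch of the claimed formula. The only genuine obstacle is confirming this identity; it is an equality of cubics in $i$ with $k$ as a parameter, so it can be verified by direct expansion of both sides or, in the spirit of the rest of the paper, checked mechanically via the Gosper algorithm~\cite{Petkovsek1996AB}. Assembling the base cases, the odd branch, and the even branch then completes the proof.
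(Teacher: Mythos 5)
Your proposal is correct and takes essentially the same route as the paper, which obtains Theorem~\ref{thm-321} precisely by substituting the formulas of Theorem~\ref{main-thm} into the relation $N_{k,\ell-1}(321)=A_{k,\ell}-A_{k-1,\ell}$ (and, for odd length, notes the same equivalence $N_{k,2i+1}(321)=N_{k,2i+1}(123)$ via reversal that you invoke). Your explicit verification of the cubic identity $(i+2k-2)(i+k-1)(i+k-2)-(i+2k-4)(k-1)(k-2)=i(i+2k-3)(i+2k-2)+2(k-2)(k-1)$ and of the base cases $\ell=0,1,2$ simply fills in the routine computation the paper leaves implicit.
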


Since $N_{k,2i+1}(123)=N_{k,2i+1}(321)$, we actually  give another approach to deal with $321$-avoiding up-down words of odd length.

\section{Concluding remarks}\label{last-sec}

In this paper we initiated the study of (pattern-avoiding) alternating words. In particular, we have shown that 123-avoiding up-down words of even length are given by the Narayana numbers. Thus, alternating words can be used, for example, for encoding Dyck paths with a specified number of peaks \cite{GaoZhang}. To our surprise, the enumeration of 123-avoiding up-down words turned out to be easier than that of 132-avoiding up-down words, as opposed to similar studies for permutations, when the structure of 132-avoiding permutations is easier than that of 123-avoiding permutations.

Above, we gave a complete classification of avoidance of permutation patterns of length~$3$ on alternating words. We state it as an open direction of research to study avoidance of longer patterns and/or patterns of different types (see~\cite{Kitaev2011Patterns}) on alternating (up-down or down-up) words.

\vskip 3mm
\noindent {\bf Acknowledgments.} 
This work was supported by the 973 Project, the PCSIRT Project of the Ministry of Education and the National Science Foundation of China.

%
%
%
%
%
%
%
%
%
%
%
%

\end{document}